\numberwithin{equation}{section}
\newcommand{\beq}{\begin{equation}}
\newcommand{\eeq}{\end{equation}}
\newcommand{\beqs}{\begin{eqnarray*}}
\newcommand{\eeqs}{\end{eqnarray*}}
\newcommand{\beqn}{\begin{eqnarray}}
\newcommand{\eeqn}{\end{eqnarray}}
\newcommand{\beqa}{\begin{array}}
\newcommand{\eeqa}{\end{array}}
\newtheorem{definition}{Definition}
\newtheorem{lemma}{Lemma}
\newtheorem{remark}{Remark}
\newtheorem{theorem}{Theorem}
\newtheorem{example}{Example}
\newtheorem{corollary}{Corollary}
\title  {a note on multi-transitivity in non-autonomous discrete systems}
\begin{document}




\bibliographystyle{plain}

\maketitle

\baselineskip=15.8pt
\parskip=3pt

\centerline {\bf   \small Hongbo Zeng}
\centerline {School of Mathematics and Statistics, Changsha University of Science and Technology;}
\centerline {Hunan Provincial Key Laboratory of Mathematical Modeling and Analysis in Engineering,}
\centerline {Changsha 410114, Hunan, China}

\vskip20pt

\noindent {\bf Abstract}:
This paper is concerned with some stronger forms of transitivity in non-autonomous discrete systems$(f_{ 1,\infty})$ generated by a uniformly convergent sequence of continuous self maps. Firstly, we present two counterexamples to show that Theorem 3.1 obtained by Salman and Das in [Multi-transitivity in nonautonomous discrete systems Topol. Appl. 278(2020)107237] is not true.  Then, we introduce and study mildly mixing in non-autonomous discrete systems, which is stronger than mixing. We obtain that multi-transitivity implies Li-Yorke chaos and that mildly mixing implies multi-transitivity, which answer the open problems 1 and 2 in the paper above. Additionally, we give a counterexample which shows that Theorem 2.3 and Theorem 2.4 given by Sharma and Raghav in [On dynamics generated by a uniformly convergent sequence of maps Topol. Appl. 247 (2018)81-90] are both incorrect and give the correct proofs of them. Finally, some counterexamples are constructed justifying that some results related to stronger forms of transitivity which are true for autonomous systems but fail in non-autonomous systems, and establish a sufficient condition under which the results still hold in non-autonomous systems.

 \vskip20pt
 \noindent{\bf Key Words:}  stronger forms of transitivity; non-autonomous discrete system; chaos; collective convergence.
 \vskip20pt

\vskip20pt

\baselineskip=15.8pt
\parskip=3pt


\maketitle

\baselineskip=15.8pt
\parskip=3.0pt




\section{Introduction}

Dynamical systems theory is an effective mathematical mechanism which describes the time dependence of a point in a geometric space and has remarkable connections with different areas of mathematics such as topology and number theory. It is used to deal with the complexity, instability, or chaoticity in the real world, such as in meteorology, ecology, celestial mechanics, and other natural sciences. In recent years, more and more scholars have begun to devote themselves to the research in topological dynamical systems, and have achieved many significant results (see \cite{q12}).  topological transitivity (shortly, transitivity) has been an eternal topic in the study of topological dynamical systems, which is a crucial measure of system complexity. The concept of transitivity can be traced back to Birkhoff \cite{q12}. Since then, many studies have been devoted to this topic. Besides, as we all know, the chaos theory was first strictly defined by Li and Yorke in 1975 \cite{a1}. And sensitive dependence on initial conditions(briefly, sensitivity), first defined in \cite{q9}, is one of the most remarkable components of dynamical systems theory, which is closely linked to different variants of mixing. It characterizes the unpredictability in chaotic phenomena and is an integral part of different types of chaos. Afterwards, some stronger forms of sensitivity and  transitivity \cite{q11} were successively proposed by Moothathu.  The relationship among variations on the concept of transitivity can be found in \cite{q27}. The notion of multi-transitivity for autonomous discrete systems was introduced by Moothathu in 2010 \cite{q33}. He has proved that for minimal autonomous systems weakly mixing and multi-transitivity are equivalent. In 2012, Kwietniak and Oprocha have constructed examples showing that in general there is no relation between multi-transitivity and weakly mixing in autonomous discrete systems \cite{q37}.  Chen \cite{q38} discussed multi-transitivity with
respect to a vector and proved that multi-transitive system is Li-Yorke chaotic.  In 2019, authors have characterized multi-transitivity for autonomous discrete dynamical systems using different forms of shadowing and uniform positive entropy \cite{q39}.

\par
As a natural extension of autonomous discrete dynamical systems (Abbrev. ADS), non-autonomous discrete dynamical systems (NDS) are an important part of topological dynamical systems. Compared with classical dynamical systems (ADS), NDS can describe various dynamical behaviors more flexibly and conveniently. Indeed, most of the natural phenomena, whose behavior is influenced by external forces, are time dependent external forces. As a result, many of the methods, concepts and results of autonomous dynamical systems are not applicable. Therefore, there is a strong need to study and develop the theory of time variant dynamical systems, that is, non-autonomous dynamical systems. As a consequence, the techniques used in this context are, in general, different from those used for autonomous systems and make this discipline of great interest. In such systems the trajectory of a point is given by successive application of different maps. These systems are related to the theory of difference equations, and in general, they provide a more adequate framework for the study of natural phenomena that appear in biology, physics, engineering, etc. Meanwhile, the dynamics of non-autonomous discrete systems has became
an active research area, obtaining results on topological entropy, sensitivity, mixing properties, chaos, and other properties.

The notion of non-autonomous dynamical system was introduced by Kolyada and Snoha \cite{a14} in 1996. Since then, the study of complexity of non-autonomous dynamical systems has seen remarkable increasing interest of many researchers, see \cite{q16,q18,q14,q23,wx1,q25} and the references therein. Chaos and sensitivity for NDS were introduced by Tian and Chen  in 2006 \cite{q16}. Moreover, Cnovas\cite{a19} studied the limit behaviour of sequence. Recently, Salman and Das introduced and studied the notions of multi-transitivity and thick transitivity for
non-autonomous discrete dynamical systems\cite{wx1}. The authors obtained a sufficient condition under
which in minimal non-autonomous discrete dynamical systems, multi-transitivity,
thick transitivity and weakly mixing of all orders are equivalent. Sharma and Raghav\cite{q36} related the dynamical behavior of $(X, f_{ 1,\infty} )$ with the dynamical behavior
of the limiting system (and vice versa).  The interested reader in transitivity, chaos and sensitivity for NDS might consult \cite{q35,q23,q16,q25}.

\par
   This paper is organized as follows. In Section 2, we will state some preliminaries, definitions and some lemmas. The main conclusions will be given in Section 3. At first, we give two counterexamples which shows that Theorem 3.1 obtained by Salman and Das in \cite{wx1} is not true.  Next, for NDS, we prove that multi-transitivity implies Li-Yorke chaos, and that mildly mixing implies multi-transitivity, which answer the open problems 1 and 2 in the paper\cite{wx1}. Then, we provide a counterexample to show that Theorem 2.3 and Theorem 2.4 given by Sharma and Raghav in \cite{q36} do not hold and reprove them. Besides, we give a sufficient condition under which weakly mixing(total transitivity, respectively) is equivalent for NDS and the limiting system. Moreover, some counterexamples are presented justifying that some results related to stronger forms of transitivity which are true for autonomous systems but fail in non-autonomous systems, and establish a sufficient condition under which the results still hold in non-autonomous systems.  These counterexamples show that there is a significant
difference between the theory of ADS and the theory of NDS. At last, we remove the condition 'the space has no isolated point' in the Theorem 3.5(5)\cite{q29} and reprove it with a simpler method. Section 4 posts several open problems for future research.

\section{Preparations and lemmas}
In this section, we mainly give some different concepts of transitivity, chaos and sensitivity for NDS (see,for example,\cite{q14,q35,q18,q36}) and some lemmas required for remaining sections of the paper.

Assume that $\mathbb{N}=\{1,2,3,...\}$. Let $(X,d)$ be a compact metric
space and $f_n : X \rightarrow X$ be a sequence of continuous functions, where $n\in \mathbb{N}$. An non-autonomous discrete
dynamical systems (NDS) is a pair $(X, f_{ 1,\infty} )$ where $f_{ 1,\infty}= \{f_n\}_{n=1}^\infty$. For any $i,n\in \mathbb{N}$,
define the composition
$f_i^n:= f_{i+(n-1)} \circ \cdot\cdot\cdot \circ f_i$ ,
and usual $f_i^0=id_X$, where $id_X$ is the identity map on $X$. In particular, if $f_n = f$ for all $n \in \mathbb{N}$, the pair $(X, f_{ 1,\infty} )$ is
just the classical discrete system $(X, f)$. The orbit of
a point $x$ in $X$ is the set
$orb(x, f_{ 1,\infty}):=\{x,f_1^1(x),f_1^2(x),...,f_1^n(x),...\}$,
which can also be described by the difference equation $x_0 = x$ and $x_{n+1} = f_n (x_n )$. Given $k \in \mathbb{N}$, $f_{ 1,\infty}^{[k]}:=\{f_{k(n-1)+1}^k\}_{n=1}^\infty$ is called the $k^{th}$-iterate of NDS $(X, f_{ 1,\infty} )$. For non-autonomous discrete dynamical systems $(X, f_{n,\infty} )$ with $n\in\mathbb{N}$, the mapping sequence $f_{n,\infty}$ is defined by $f_{n,\infty}=\{f_n,f_{n+1},...\}$. Denote $f_1^{-n}=(f_1^{n})^{-1}=f_1^{-1}\circ f_2^{-1}\circ\cdots\circ f_n^{-1}$, and $B(x,\varepsilon)$ the open ball of radius $\varepsilon>0$ with center $x$. We denote by $int(A)$ the interior of $A$ and by $\overline{A}$ its closure.
If $\mathcal{C}(X)$ is the collection of continuous self maps on $X$, then for any $f,g\in\mathcal{C}(X)$, $D(f,g)=sup_{x\in X}d(f(x),g(x))$ is a metric on $\mathcal{C}(X)$ known as the $Supremum$ $metric$. A  collection of sequences $\{f_n^{n+k}\}_{k\in\mathbb{N}}$ converges collectively to $\{f^k\}_{k\in\mathbb{N}}$ with respect to the metric $D$ if for any $\varepsilon>0$, there exists $r_0\in\mathbb{N}$ such that $D(f_r^k,f^k) < \varepsilon$ for all $r\geq r_0$ and for every $k\in\mathbb{N}$\cite{q36}.
We always suppose that $(X, f_{ 1,\infty} )$  is a non-autonomous discrete system and that all the maps are continuous from $X$ to $X$ in the following context.

\begin{definition}
The non-autonomous system $(X, f_{ 1,\infty} )$  is said to be (topologically) transitive, if for any two non-empty subsets $U,V\subseteq X$, there exists $n\in \mathbb{N}$ such that $f_1^n(U)\cap V\neq\emptyset$.

The non-autonomous system $(X, f_{ 1,\infty} )$  is said to be weakly mixing, if for any four non-empty subsets $U_1,U_2,V_1,V_2\subseteq X$, there exists $n\in \mathbb{N}$ such that $f_1^n(U_1)\cap V_1\neq\emptyset$ and $f_1^n(U_2)\cap V_2\neq\emptyset$.

The non-autonomous system $(X, f_{ 1,\infty} )$  is said to be weakly mixing of order $n(n\geq2)$, if for any collection of non-empty subsets $U_1,U_2,...,U_n,V_1,V_2,...,V_n\subseteq X$, there exists $m\in \mathbb{N}$ such that $f_1^m(U_i)\cap V_i\neq\emptyset$ for each $i=\{1,2,...,n\}$.

The non-autonomous system $(X, f_{ 1,\infty} )$  is said to be mixing, if for any two non-empty subsets $U,V\subseteq X$, there exists $N\in \mathbb{N}$ such that $f_1^n(U)\cap V\neq\emptyset$ for any $n\geq N$.

The non-autonomous system $(X, f_{ 1,\infty} )$  is said to be mildly mixing, if for any transitive non-autonomous system $(Y, g_{ 1,\infty} )$, the product system $(X\times Y, f_{ 1,\infty}\times g_{ 1,\infty} )$ is transitive.

The non-autonomous system $(X, f_{ 1,\infty} )$  is said to be totally transitive, if for any $n\in \mathbb{N}$, $f_{ 1,\infty}^{[n]}$ is transitive.

The non-autonomous system $(X, f_{ 1,\infty} )$  is said to be strongly transitive, if for any non-empty subset $U\subseteq X$, there exists $M\in \mathbb{N}$ such that $\cup_{i=1}^Mf_1^i(U)=X$.

\end{definition}

\begin{definition}
 A point $x\in X$ is said to be transitive if the orbit of $x$ is dense in $X$. The non-autonomous system $(X, f_{ 1,\infty} )$  is said to be minimal if all the points of $X$ are transitive. The non-autonomous system $(X, f_{ 1,\infty} )$  is said to be feeble open if $int(f_i(U))\neq\emptyset$ for any nonempty open set $U\subseteq X$ and any $i\in \mathbb{N}$.
\end{definition}

\begin{definition}
 A non-autonomous discrete system $f_{ 1,\infty}$ is called multi-transitive if $f_{ 1,\infty}\times f_{ 1,\infty}^{[2]}\times\cdots\times f_{ 1,\infty}^{[m]}:X^m\rightarrow X^m$ is transitive for any $m\in \mathbb{N}$. Equivalently, if for any collection of nonempty open subsets $U_1,U_2,...,U_m;V_1,V_2,...,V_m$ of $X$, there exists $l\in \mathbb{N}$ such that  $f_1^{jl}(U_j)\cap V_j\neq\emptyset$ for each $j\in\{1,2,...,m\}$  and for any $m\in \mathbb{N}$, then $f_{ 1,\infty}$ is multi-transitive.
\end{definition}

For the convenience of the following context, we denote $$N_{f_{ 1,\infty}}(U,V)=\{n\in \mathbb{N} \mid f_1^n(U)\cap V\neq\emptyset\},$$ and denote $$N_{f_{1,\infty}}(U,\delta)=\{n\in\mathbb{N}\mid \text{there exist} \ x,y\in U \ \text{such that } d(f_1^n(x),f_1^n(x))>\delta \}$$ for any nonempty open sets $U,V$ of $X$.

\begin{definition}
A set $A\subseteq \mathbb{N}$ is called syndetic if there exists a positive integer $M$ such that $\{i,i+1,...,i+M\}\cap A\neq\emptyset$ for every $i\in \mathbb{N}$, i.e. it has bounded gaps. A set $A\subseteq \mathbb{N}$ is called cofinite if there exists $N\in \mathbb{N}$ such that $A\supseteq[N,\infty)\cap \mathbb{N}$. A set $A\subseteq \mathbb{N}$ is called thick if it contains arbitrarily long runs of positive integers, that is, for any $p\in \mathbb{N}$, there exists some $n\in \mathbb{N}$ such that $\{n,n+1,...,n+p\}\subseteq A$.

The non-autonomous system $(X, f_{ 1,\infty} )$  is said to be syndetically transitive, if for any two non-empty subsets $U,V\subseteq X$, the set $N_{f_{ 1,\infty}}(U,V)$ is syndetical.

The non-autonomous system $(X, f_{ 1,\infty} )$  is said to be sensitive, if there is $\delta>0$ such that for any nonempty open set $U\subset X$, there exist $x,y\in U$ and $n\in \mathbb{N}$ such that $d(f_1^n(x),f_1^n(y))>\delta$.

A non-autonomous system $(X, f_{ 1,\infty} )$ is said to be syndetically sensitive, if there exists $\delta>0$ such that for any nonempty open set $U\subset X$, $N_{f_{1,\infty}}(U,\delta)$ is a syndetic set.

 A non-autonomous system $(X, f_{ 1,\infty} )$ is said to be thickly sensitive, if there exists $\delta>0$ such that for any nonempty open set $U\subset X$, $N_{f_{1,\infty}}(U,\delta)$ is thick.

A non-autonomous system $(X, f_{ 1,\infty} )$ is said to be multi-sensitive, if there exists $\delta>0$ such that for any $m\in \mathbb{N}$ and any nonempty open sets $U_1,U_2,...,U_m\subset X$, $\bigcap_{i=1}^mN_{f_{1,\infty}}(U_i,\delta)\neq\emptyset$, where $\delta$ is called constant of sensitivity.
\end{definition}

\begin{definition}
 A point $x\in X$ is called $k$-periodic if $f_1^{kn}(x)=x$ for any $n\in \mathbb{N}$. A point $x\in X$ is called almost periodic point if for any $\varepsilon>0$, the set $\{n\in \mathbb{N} \mid d(f_1^n(x),x)<\varepsilon\}$ is syndetical.
\end{definition}

\begin{definition}
 If there exists an uncountable subset $S\subseteq X$ such that for any different points $x,y\in S$  we have
$$\liminf_{n\rightarrow\infty} d(f_1^{n}(x), f_1^{n}(y))=0,\ \limsup_{n\rightarrow\infty} d(f_1^{n}(x), f_1^{n}(y))>0,$$
then $f_{ 1,\infty}$ is said to be Li-Yorke chaos.
\end{definition}

\begin{lemma}(\cite[Corollary1]{q299})\label{lychaos}
Let $\{p_k\}_{k=1}^{\infty}$ be a sequence of positive integers, $\{A_i\}_{i=0}^{\infty}$ and $\{B_i\}_{i=0}^{\infty}$ be decreasing sequences of compact sets satisfying $$\bigcap_{i=0}^{\infty}A_i={a}, \bigcap_{i=0}^{\infty}B_i={b},$$ where $a\neq b$. If $\forall c=C_1C_2...$, where $C_k=A_k$ or $B_k$ for $k=1,2...$, there exists $x_c\in X$, such that $\forall k\geq1$, we have $f_1^{p_k}(x_c)\in C_k$, then $f_{ 1,\infty}$ is Li-Yorke chaos.
\end{lemma}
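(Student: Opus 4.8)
The plan is to produce an uncountable scrambled set directly from the points $x_c$ furnished by the hypothesis, using the two limit points $a\neq b$ as the source of the required oscillation.

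First I would record the geometric consequence of nestedness and compactness: for every $\varepsilon>0$ there is an $N$ with $A_i\subseteq B(a,\varepsilon)$ for all $i\geq N$. Indeed $A_i\setminus B(a,\varepsilon)=A_i\cap(X\setminus B(a,\varepsilon))$ is a decreasing sequence of compact sets whose intersection is $\left(\bigcap_i A_i\right)\setminus B(a,\varepsilon)=\{a\}\setminus B(a,\varepsilon)=\emptyset$, so by the finite intersection property some $A_N\setminus B(a,\varepsilon)=\emptyset$, and monotonicity extends this to all $i\geq N$; the same holds for $B_i$ and $b$. Hence
\[
\sup_{y\in A_i}d(y,a)\longrightarrow 0,\qquad \sup_{y\in B_i}d(y,b)\longrightarrow 0 \quad (i\to\infty).
\]
Here I would also note that, as in the cited source, the sequence $\{p_k\}$ is taken strictly increasing (so $p_k\to\infty$); this is what lets the behaviour along the times $p_k$ feed into $\liminf_{m\to\infty}$ and $\limsup_{m\to\infty}$.

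Next I would manufacture an uncountable family of itineraries that pairwise agree and disagree infinitely often. Fix a partition $\mathbb{N}=\bigsqcup_{j\geq1}N_j$ into infinitely many infinite sets, and for each $\sigma=(\sigma_j)_{j\geq1}\in\{0,1\}^{\mathbb{N}}$ with $\sigma_1=0$ define an itinerary $c^{\sigma}=C_1^{\sigma}C_2^{\sigma}\cdots$ by putting $C_n^{\sigma}=A_n$ if $\sigma_j=0$ and $C_n^{\sigma}=B_n$ if $\sigma_j=1$, where $j$ is the index with $n\in N_j$. The index set $\Sigma=\{\sigma:\sigma_1=0\}$ is uncountable; for distinct $\sigma,\sigma'\in\Sigma$ the itineraries agree on the whole infinite block $N_1$ (since $\sigma_1=\sigma'_1=0$) and disagree on every block $N_{j_0}$ with $\sigma_{j_0}\neq\sigma'_{j_0}$, so they agree at infinitely many coordinates and disagree at infinitely many coordinates. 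I would then set $x_{\sigma}:=x_{c^{\sigma}}$ (the point from the hypothesis) and $S=\{x_{\sigma}:\sigma\in\Sigma\}$.

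Finally I would check that $S$ is scrambled. For $\sigma\neq\sigma'$, at each coordinate $n$ with $C_n^{\sigma}=C_n^{\sigma'}$ both $f_1^{p_n}(x_{\sigma})$ and $f_1^{p_n}(x_{\sigma'})$ lie in one common set $A_n$ or $B_n$, so Step 1 gives $d\!\left(f_1^{p_n}(x_{\sigma}),f_1^{p_n}(x_{\sigma'})\right)\to 0$ along the (infinitely many) such $n$; since $p_n\to\infty$ this forces $\liminf_{m\to\infty}d(f_1^{m}(x_{\sigma}),f_1^{m}(x_{\sigma'}))=0$. At each coordinate $n$ with $C_n^{\sigma}\neq C_n^{\sigma'}$ one image lies in $A_n$ and the other in $B_n$, so
\[
d\!\left(f_1^{p_n}(x_{\sigma}),f_1^{p_n}(x_{\sigma'})\right)\geq d(a,b)-\sup_{y\in A_n}d(y,a)-\sup_{y\in B_n}d(y,b)\longrightarrow d(a,b)>0,
\]
whence $\limsup_{m\to\infty}d(f_1^{m}(x_{\sigma}),f_1^{m}(x_{\sigma'}))\geq d(a,b)>0$; in particular $x_{\sigma}\neq x_{\sigma'}$, so $\sigma\mapsto x_{\sigma}$ is injective and $S$ is uncountable. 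Thus every pair of distinct points of $S$ is a Li--Yorke pair and $S$ is a scrambled set, so $f_{1,\infty}$ is Li--Yorke chaotic. The one genuinely delicate point is the combinatorial Step 2: to make every pair of points of $S$ simultaneously proximal and non-asymptotic one needs itineraries that pairwise agree infinitely often \emph{and} differ infinitely often, and the block partition with the first symbol pinned to $0$ secures both at once; the remaining estimates are routine consequences of compactness and nestedness.
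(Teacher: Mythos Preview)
The paper does not supply its own proof of this lemma; it is quoted verbatim as Corollary~1 of \cite{q299} and used as a black box. Your argument is a correct, self-contained proof: the compactness step showing $\operatorname{diam}A_i,\operatorname{diam}B_i\to 0$ is standard, the block-partition construction of $\Sigma$ does yield uncountably many itineraries that pairwise agree on the infinite block $N_1$ and disagree on some infinite block $N_{j_0}$, and the ensuing $\liminf/\limsup$ estimates via the triangle inequality are routine. Your remark that one must take $p_k\to\infty$ (strictly increasing, as in the source) is essential and rightly flagged, since the statement as printed here omits that hypothesis but the conclusion cannot be drawn without it; this is exactly how the sequence is built when the lemma is invoked later in the paper.
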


\begin{lemma}(\cite[Lemma2.6]{q14})\label{yinlijmaa2}
Let $(X,d)$ be a metric space without isolated points. If $(X, f_{ 1,\infty} )$ is transitive, then for any pair of non-empty open subsets $U,V$ of $X$, the set $N_{f_{ 1,\infty}}(U,V)$ is infinite.
\end{lemma}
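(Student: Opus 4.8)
The plan is to prove the statement by producing a residual set of points whose orbits are dense, and then observing that a dense orbit must meet any fixed open set infinitely often precisely because $X$ has no isolated points. I would deliberately avoid a direct ``shrinking'' argument on $U$ or $V$: a single forward image $f_1^n(U)$ can meet many disjoint target sets simultaneously, so naive attempts to separate points and manufacture new hitting times break down.

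First, for a nonempty open set $V$ I would introduce $W_V=\bigcup_{n\ge 1}(f_1^n)^{-1}(V)$, where $(f_1^n)^{-1}(V)$ denotes the full set-theoretic preimage. Each preimage is open by continuity of $f_1^n$, so $W_V$ is open. The key reformulation is that $f_1^n(U)\cap V\neq\emptyset$ if and only if $U\cap (f_1^n)^{-1}(V)\neq\emptyset$; hence transitivity of $(X,f_{1,\infty})$ says exactly that $W_V$ meets every nonempty open set, i.e. $W_V$ is dense.

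Next, since $(X,d)$ is compact metric it is second countable and complete, hence a Baire space; I would fix a countable base $\{V_j\}_{j\ge 1}$ of nonempty open sets. By the previous step each $W_{V_j}$ is open and dense, so $G=\bigcap_{j\ge1}W_{V_j}$ is a dense $G_\delta$. Any $x\in G$ lies in every $W_{V_j}$, so its orbit $\{f_1^n(x):n\ge1\}$ meets every basic open set, hence every nonempty open set, so the orbit of $x$ is dense. I then fix such a point $x\in U$, which is possible because $G$ is dense and $U$ is nonempty open, whence $U\cap G\neq\emptyset$.

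Finally I would invoke the no-isolated-point hypothesis. Since $X$ has no isolated points and $V$ is nonempty open, $V$ is infinite and itself has no isolated points, so any dense subset of $V$ is infinite. The orbit of $x$ meets $V$ in a dense, hence infinite, set of distinct points, and infinitely many distinct values $f_1^n(x)\in V$ force infinitely many distinct indices $n$. Because $x\in U$, each such $n$ satisfies $f_1^n(U)\cap V\neq\emptyset$, so these indices lie in $N_{f_{1,\infty}}(U,V)$, which is therefore infinite. The main obstacle, and the step deserving the most care, is the Baire-category argument producing a point with dense orbit: one must verify that transitivity as defined (via images of open sets) is genuinely equivalent to density of each $W_V$, and that compactness of $X$ supplies both the countable base and the Baire property needed to intersect countably many dense open sets without the intersection collapsing.
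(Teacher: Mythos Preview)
Your argument is correct, given the paper's standing assumption that $X$ is compact (hence complete and second countable), which is exactly what makes the Baire-category step go through. Note, however, that the paper does not supply its own proof of this lemma: it is quoted from \cite{q14}, so there is no in-text argument to compare against directly.

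That said, the method the present paper uses whenever it proves analogous infiniteness statements on its own (see the proof of Theorem~\ref{weaklyinf} and the claim inside the proof of Theorem~\ref{min}) is precisely the direct shrinking argument you chose to avoid. Your worry that a single image $f_1^n(U)$ may meet many disjoint targets at once is legitimate for a naive pigeonhole on the sets $N_{f_{1,\infty}}(U,V_j)$, but it dissolves once one passes to a single point: assuming $p=\max N_{f_{1,\infty}}(U,V)$, pick $x\in U$ with $f_1^{p}(x)\in V$; since $V$ has no isolated points one can choose a nonempty open $V'\subset V$ whose closure misses the finite set $\{f_1^{1}(x),\dots,f_1^{p}(x)\}$; by continuity shrink $U$ to a neighbourhood $U'\ni x$ with $f_1^{i}(U')\cap V'=\emptyset$ for $i\le p$; then transitivity applied to $U',V'$ yields some $m>p$ in $N_{f_{1,\infty}}(U,V)$, a contradiction. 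This route is more elementary, needs neither completeness nor second countability, and matches the style of the surrounding paper; your transitive-point approach is valid and conceptually clean but genuinely relies on the Baire property, so it proves a formally weaker statement.
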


\begin{lemma}(\cite[Lemma3]{q299})\label{yizhi}
Assume that non-autonomous discrete system $(X,f_{1,\infty})$ converges uniformly to a map $f$. Then for any $\varepsilon >0$ and any $k\in \mathbb{N}$, there exist $\xi$ such that for any $n\in\mathbb{N}$ and any pair $x,y \in X$ with $d(x,y)<\xi$, $d(f_n^k(x),f_n^k(y))<\frac{\varepsilon}{2}$.
\end{lemma}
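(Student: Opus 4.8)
The assertion is precisely that for each fixed $k$ the family $\{f_n^k : n\in\mathbb{N}\}$ is uniformly equicontinuous, the crucial point being that a single $\xi$ must serve for all starting indices $n$ simultaneously. For a fixed $n$ this is immediate, since $f_n^k$ is a continuous self-map of the compact space $X$ and hence uniformly continuous; the entire difficulty lies in the uniformity over the infinitely many $n$. My plan is to exploit the uniform convergence $D(f_n,f)\to 0$ to handle all large $n$ at once, to treat the finitely many remaining indices individually, and then to pass from $k=1$ to general $k$ by induction on the number of composed maps.

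First I would settle the case $k=1$, namely that $\{f_n\}$ is uniformly equicontinuous. Since $f$ is a continuous self-map (being a uniform limit of continuous maps) of the compact space $X$, it is uniformly continuous, so given $\varepsilon>0$ there is $\xi_0>0$ with $d(f(x),f(y))<\varepsilon/6$ whenever $d(x,y)<\xi_0$. By uniform convergence choose $N$ with $D(f_n,f)<\varepsilon/6$ for all $n\geq N$; then for such $n$ the triangle inequality $d(f_n(x),f_n(y))\leq d(f_n(x),f(x))+d(f(x),f(y))+d(f(y),f_n(y))$ gives $d(f_n(x),f_n(y))<\varepsilon/2$ once $d(x,y)<\xi_0$. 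For each of the finitely many indices $n<N$, uniform continuity of $f_n$ furnishes a $\xi_n>0$, and taking $\xi=\min\{\xi_0,\xi_1,\dots,\xi_{N-1}\}$ yields a single modulus valid for every $n\in\mathbb{N}$.

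I would then induct on $k$, proving the statement $P(k)$: for every $\alpha>0$ there is $\xi>0$ with $d(f_n^k(x),f_n^k(y))<\alpha$ for all $n$ whenever $d(x,y)<\xi$ (the lemma is the instance $\alpha=\varepsilon/2$). The base case $P(1)$ is the preceding paragraph. For the inductive step, write $f_n^k=f_{n+k-1}\circ f_n^{k-1}$. Given $\alpha>0$, the case $k=1$ produces $\eta>0$ such that $d(u,v)<\eta$ forces $d(f_m(u),f_m(v))<\alpha$ for every $m$, in particular for $m=n+k-1$; the induction hypothesis $P(k-1)$ applied with target $\eta$ produces $\xi>0$ such that $d(x,y)<\xi$ forces $d(f_n^{k-1}(x),f_n^{k-1}(y))<\eta$ for all $n$. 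Composing these, $d(x,y)<\xi$ gives $d(f_n^k(x),f_n^k(y))<\alpha$ for all $n$, which closes the induction.

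The one genuine obstacle is the uniformity in the starting index $n$; everything else is standard compactness-plus-triangle-inequality bookkeeping. That obstacle is dissolved entirely in the $k=1$ step, where uniform convergence collapses the infinitely many large indices into a single estimate against the limit map $f$ while compactness disposes of the finite remainder, and the inductive step merely propagates this uniform modulus through the $k$-fold composition.
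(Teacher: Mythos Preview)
Your argument is correct: the base case $k=1$ is the standard fact that a uniformly convergent sequence of continuous maps on a compact space is uniformly equicontinuous, and your induction via $f_n^k=f_{n+k-1}\circ f_n^{k-1}$ propagates the uniform modulus cleanly through the composition, with the key uniformity in $n$ carried by the $k=1$ step.

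There is nothing to compare against in this paper itself: the lemma is quoted verbatim from \cite[Lemma~3]{q299} and no proof is given here. Your proposal supplies a complete self-contained argument for a result the present paper merely cites.
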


\section{Main results}
Salman and Das have proved the following results in Theorem 3.1\cite{wx1}.

\begin{theorem}\cite[Theorem 3.1]{wx1}\label{th31}
 Let $(X,f_{1,\infty})$ be a non-autonomous dynamical system such that each $f_i$ is surjective. If $(X,f_{1,\infty})$ is multi-transitive, then so is $(X,f_{k,\infty})$, for every $k\geq1$. Converse is true if the family $f_{1,\infty}$ is feebly open.
\end{theorem}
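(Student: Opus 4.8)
\emph{The plan} is to reduce multi-transitivity to its combinatorial form and then transport open sets between the two systems, using surjectivity for the forward implication and feeble openness for the converse, with the single composition $f_1^{k-1}$ serving as the bridge. Recall that, by the definition, $(X,f_{1,\infty})$ is multi-transitive exactly when for every $m\in\mathbb{N}$ and all nonempty open $U_1,\dots,U_m,V_1,\dots,V_m$ there is one $l\in\mathbb{N}$ with $f_1^{jl}(U_j)\cap V_j\neq\emptyset$ for every $j\in\{1,\dots,m\}$; since the $n$-fold composition of $f_{k,\infty}$ from its start equals $f_k^{n}$, multi-transitivity of $(X,f_{k,\infty})$ is the same assertion with $f_k^{jl}$ in place of $f_1^{jl}$. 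The two are linked by the identity $f_1^{(k-1)+n}=f_k^{\,n}\circ f_1^{k-1}$, valid for all $n\geq 0$.

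\textbf{Forward implication.} Assume $(X,f_{1,\infty})$ is multi-transitive and fix $m$ and open sets $U_1,\dots,U_m,V_1,\dots,V_m$. Because each $f_i$ is surjective, so is $g:=f_1^{k-1}$, hence each $U_j':=g^{-1}(U_j)$ is nonempty (by surjectivity) and open (by continuity), with $g(U_j')=U_j$. Applying multi-transitivity of $f_{1,\infty}$ to $U_1',\dots,U_m',V_1,\dots,V_m$ yields an $l$ with $f_1^{jl}(U_j')\cap V_j\neq\emptyset$ for all $j$; rewriting through the bridge identity (for $jl\geq k-1$) gives $f_1^{jl}(U_j')=f_k^{\,jl-(k-1)}(U_j)$, so that $f_k^{\,jl-(k-1)}(U_j)\cap V_j\neq\emptyset$.

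\textbf{Converse.} Assume $(X,f_{k,\infty})$ is multi-transitive and $f_{1,\infty}$ is feebly open. Given $U_j,V_j$, iterating feeble openness $k-1$ times shows $U_j'':=int\big(f_1^{k-1}(U_j)\big)$ is nonempty and open; applying multi-transitivity of $f_{k,\infty}$ to the $U_j''$ and the $V_j$ produces an $l$ with $f_k^{jl}(U_j'')\cap V_j\neq\emptyset$, and since $f_k^{jl}(U_j'')\subseteq f_1^{(k-1)+jl}(U_j)$ this forces $f_1^{(k-1)+jl}(U_j)\cap V_j\neq\emptyset$.

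\textbf{The main obstacle.} Both directions deliver hitting times of the shape $jl\pm(k-1)$, whereas multi-transitivity requires the one parameter to enter the $j$-th coordinate as $j\,l'$: the $j$-th iterate runs $j$ times as fast, so any admissible offset must scale with $j$, yet the shift contributes the same additive constant $k-1$ in every coordinate. Converting data of the form $jl\pm(k-1)$ into a single $l'$ with $f_k^{jl'}(U_j)\cap V_j\neq\emptyset$ (respectively $f_1^{jl'}(U_j)\cap V_j\neq\emptyset$) simultaneously in all $m$ coordinates is the crux, and I do not expect it to go through in general — this misalignment between a uniform additive shift and the multiplicative coordinate speeds is precisely the weakness that a counterexample would be built to exploit.
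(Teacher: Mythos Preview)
Your suspicion is correct, and the paper confirms it: the statement is false, and the paper's treatment is not a proof but a refutation by two explicit counterexamples. The obstacle you isolate---that shifting the starting index contributes a uniform additive offset $\pm(k-1)$ to the hitting times, whereas multi-transitivity requires the single parameter $l'$ to enter the $j$-th coordinate as $jl'$---is exactly the mechanism the counterexamples exploit.

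What remains, and what you stop short of, is an actual witness. The paper works on the two-sided full shift $(\Sigma_2,\sigma)$. For the forward implication it takes
\[
f_{1,\infty}=\{id,\sigma,\sigma^{-1},\sigma^2,\sigma^{-2},\dots\},
\]
arranged so that $f_1^{2n}=\sigma^{n}$ (hence $f_{1,\infty}$ is multi-transitive, via the mixing of $\sigma$: choose $l=2(M+1)$) while $f_2^{2t}\equiv id$ for all $t$ (so already at $m=2$ the second coordinate of $f_{2,\infty}$ never leaves its starting set, killing multi-transitivity). For the converse it prepends one further $id$, which swaps the roles: now $f_2^{2n}=\sigma^{n}$ makes $f_{2,\infty}$ multi-transitive, whereas $f_1^{2t}\equiv id$ destroys multi-transitivity of $f_{1,\infty}$. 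In both examples every $f_i$ is a homeomorphism, so the sequences are feebly open and surjective; neither hypothesis rescues the claim. Your analysis explains \emph{why} a direct proof cannot succeed; to complete the argument you must exhibit such a construction.
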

 However, we will disprove it. The following two examples justify the above-mentioned Theorem  \ref{th31}  need not be true for general non-autonomous dynamical system.

\begin{example}\label{li1}

Let $\Sigma_2=\{0,1\}^{\mathbb{Z}}=\{(...,x_{-2},x_{-1},\boxed{x_0},x_1,x_2,...)\mid x_i\in\{0,1\}, \text{for each} \\ i\in\mathbb{Z}\}$ with metric
$$d(x,y)=\sum\limits_{i=-\infty}\limits^{\infty}\frac{|x_i-y_i|}{2^{|i|}}$$
for any pair $x=(...,x_{-2},x_{-1},\boxed{x_0},x_1,x_2,...),y=(...,y_{-2},y_{-1},\boxed{y_0},y_1,y_2,...)\in\Sigma_2$.
Define the shift map $\sigma:\Sigma_2\rightarrow \Sigma_2$ by $\sigma(x)=(...,x_{-2},x_{-1},x_0,\boxed{x_1},x_2,...)$, where $x=(...,x_{-2},x_{-1},\boxed{x_0},x_1,x_2,...)\in\Sigma_2$. Consider the non-autonomous discrete system $(\Sigma_2,f_{1,\infty})$, where $f_{1,\infty}$ is defined by
$$f_{1,\infty}=\{id,id,\sigma,\sigma^{-1},\sigma^2,\sigma^{-2},...,\sigma^n,\sigma^{-n},...\}.$$
Then $$f_{2,\infty}=\{id,\sigma,\sigma^{-1},\sigma^2,\sigma^{-2},...,\sigma^n,\sigma^{-n},...\}.$$
It is easy to see that $f_{1,\infty}$ is feebly open and each $f_i$ is surjective.

Now we show that $f_{2,\infty}$ is multi-transitive. For any $m\in\mathbb{N}$, let $U_1,U_2,...,U_m;V_1,V_2,...,V_m$ be any collection of nonempty open subsets of $X$. Since $\sigma$ is mixing, there exists $M\in\mathbb{N}$ such that $\sigma^p(U_j)\cap V_j\neq\emptyset$ for any $p>M$ and any $j\in\{1,2,...,m\}$. Take $l=2(M+1)$, then we can get that $f_2^{lj}=f_2^{2(M+1)j}=\sigma^{(M+1)j}$. Therefore,
$$f_2^{lj}(U_j)\cap V_j=\sigma^{(M+1)j}(U_j)\cap V_j\neq\emptyset$$ for any $j\in\{1,2,...,m\}$. Thus $f_{2,\infty}$ is multi-transitive.

Next, we prove that $f_{1,\infty}$ is not multi-transitive. Take $m=2$, and take nonempty open subsets $U_1,U_2,V_1,V_2$ of $X$ such that $U_1\cap V_1=\emptyset$ and $U_2\cap V_2=\emptyset$. Note that for any $t\in\mathbb{N}$, $f_1^{2t}\equiv id$. Then for any $t\in\mathbb{N}$, $f_1^{2t}(U_2)\cap V_2=U_2\cap V_2=\emptyset$. Therefore, $f_{1,\infty}$ is not multi-transitive.

\end{example}

\begin{example}

Let $\Sigma_2$ and $\sigma$ be defined in Example \ref{li1}. Consider the non-autonomous discrete system $(\Sigma_2,f_{1,\infty})$, where $f_{1,\infty}$ is defined by
$$f_{1,\infty}=\{id,\sigma,\sigma^{-1},\sigma^2,\sigma^{-2},...,\sigma^n,\sigma^{-n},...\}.$$
Then $$f_{2,\infty}=\{\sigma,\sigma^{-1},\sigma^2,\sigma^{-2},...,\sigma^n,\sigma^{-n},...\}.$$
It is not difficult to see that $f_{1,\infty}$ is feebly open and each $f_i$ is surjective.

Now we verify that $f_{1,\infty}$ is multi-transitive. For any $m\in\mathbb{N}$, let $U_1,U_2,...,U_m;V_1,V_2,...,V_m$ be any collection of nonempty open subsets of $X$. Since $\sigma$ is mixing, there exists $M\in\mathbb{N}$ such that $\sigma^p(U_j)\cap V_j\neq\emptyset$ for any $p>M$ and any $j\in\{1,2,...,m\}$. Take $l=2(M+1)$, we can get that $f_1^{lj}=f_1^{2(M+1)j}=\sigma^{(M+1)j}$. Therefore,
$$f_1^{lj}(U_j)\cap V_j=\sigma^{(M+1)j}(U_j)\cap V_j\neq\emptyset$$ for any $j\in\{1,2,...,m\}$. Thus $f_{1,\infty}$ is multi-transitive.

Next, we claim that $f_{2,\infty}$ fails to be multi-transitive. Take $m=2$, and take nonempty open subsets $U_1,U_2,V_1,V_2$ of $X$ such that $U_1\cap V_1=\emptyset$ and $U_2\cap V_2=\emptyset$. Observe that for any $t\in\mathbb{N}$, $f_1^{2t}\equiv id$. Then for any $t\in\mathbb{N}$, $f_2^{2t}(U_2)\cap V_2=U_2\cap V_2=\emptyset$. Therefore, $f_{2,\infty}$ fails to be multi-transitive.

\end{example}

\begin{remark}
We can use a similar argument to show that for every $k\geq2$ there exist $(X,f_{1,\infty})$ and $(X,f_{k,\infty})$ such that the former is multi-transitive(not multi-transitive, respectively) but the later is not multi-transitive(multi-transitive, respectively).
\end{remark}

The next example to show that for general system there exist autonomous dynamical systems $(X,f)$ and non-autonomous discrete systems $(X,f_{1,\infty})$ such that $f$ is nonminimal even though $(X,f_{1,\infty})$ is minimal, feeble open, and converges uniformly to a map $f$ with $\{f_n^{n+k}\}_{k\in\mathbb{N}}$ converging collectively to $\{f^k\}_{k\in\mathbb{N}}$.

\begin{example}\label{li4}

Let $(X=\{1,2\},d)$ be a discrete metric space with the discrete metric $d$. And let $f(x)\equiv2$, $f_1(x)=1$ and $f_i(x)\equiv2$ for all $i\geq2$ for any $x\in X$. Then it is easy to see that $(X,f_{1,\infty})$ is minimal, feeble open, and converges uniformly to a map $f$ with $\{f_n^{n+k}\}_{k\in\mathbb{N}}$ converging collectively to $\{f^k\}_{k\in\mathbb{N}}$, but $f$ is nonminimal.

\end{example}

For transitive non-autonomous discrete system, if $f_{1,\infty}$ has no isolate point, then the set $N_{f_{1,\infty}}(U,V)$ is infinite\cite[Lemma2.6]{q14}, but the result is not true in general if $f_{1,\infty}$ has isolate point(see, Example \ref{li4}). However, the following two theorems show that for any multi-transitive or weakly mixing non-autonomous discrete system, the result is true.
\begin{theorem}\label{mildlyinf}
Let $(X,f_{1,\infty})$ be a non-autonomous discrete system. If
$f_{1,\infty}$ is multi-transitive, then the set $\{l\in\mathbb{N}:f_1^{lj}(U_j)\cap V_j\neq\emptyset,\forall j=1,2,...,m\}$ for any collection of nonempty open subsets $U_1,U_2,...,U_m;V_1,V_2,...,V_m$ of $X$, is infinite.
\end{theorem}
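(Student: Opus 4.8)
The plan is to show that the witness set
$$S=\{l\in\mathbb{N} : f_1^{jl}(U_j)\cap V_j\neq\emptyset \text{ for all } j=1,\dots,m\}$$
is unbounded, from which infiniteness is immediate. The guiding observation is that multi-transitivity is assumed for collections of \emph{every} length, and that the exponents occurring in the condition, namely $l,2l,\dots,ml$, are exactly the first $m$ multiples of a single ``scale'' $l$. Aligning this multiplicative pattern with the one produced by a single multi-transitivity witness is the heart of the argument.

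Concretely, I would fix an arbitrary $r\in\mathbb{N}$ and build an auxiliary collection of $mr$ pairs of nonempty open sets $(W_i,Z_i)_{i=1}^{mr}$ by placing the original pairs at the multiples of $r$ and padding elsewhere: set $W_{jr}=U_j$ and $Z_{jr}=V_j$ for each $j\in\{1,\dots,m\}$, and set $W_i=Z_i=X$ whenever $i$ is not a multiple of $r$. Since $U_j$, $V_j$ and $X$ are nonempty open sets, this is a legitimate collection of length $mr$, so the definition of multi-transitivity yields some $l=l_r\in\mathbb{N}$ with $f_1^{il_r}(W_i)\cap Z_i\neq\emptyset$ for every $i$. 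The padding positions impose no restriction, because $f_1^{il_r}(X)\cap X=f_1^{il_r}(X)\neq\emptyset$ automatically.

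Restricting attention to the positions $i=jr$ then gives $f_1^{jrl_r}(U_j)\cap V_j\neq\emptyset$ for all $j=1,\dots,m$. Writing $L=rl_r$, the relevant exponents are $jrl_r=jL$, i.e.\ precisely $L,2L,\dots,mL$, so $L\in S$; moreover $L=rl_r\geq r$ because $l_r\geq1$. Since $r\in\mathbb{N}$ was arbitrary, $S$ contains an element $\geq r$ for every $r$, hence is unbounded and therefore infinite. I expect the only real step to be the choice of spacing in the auxiliary collection: inserting the $m$ original pairs at positions $r,2r,\dots,mr$ is exactly what forces the arithmetic progression of exponents delivered by one multi-transitivity witness to coincide with the progression $L,2L,\dots,mL$ required for membership in $S$; the remaining verifications are routine. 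Note that this argument deliberately avoids trying to compose witnesses across different times, which would be illegitimate here since $f_1^{a+b}=f_{a+1}^{b}\circ f_1^{a}\neq f_1^{b}\circ f_1^{a}$ in the non-autonomous setting.
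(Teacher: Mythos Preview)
Your argument is correct and rests on the same mechanism as the paper's: build an auxiliary collection in which the pair $(U_j,V_j)$ occupies index $cj$ for some fixed $c$, so that a single multi-transitivity witness $k$ forces $ck\in S$ with $ck\geq c$. The only cosmetic differences are that you take $c=r$ arbitrary, pad the remaining indices with $(X,X)$, and conclude unboundedness directly, whereas the paper argues by contradiction, choosing $c=2p$ for $p$ the assumed maximum of $S$ and filling the remaining indices by repeating the $(U_j,V_j)$ in blocks.
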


\begin{proof}
Let us suppose that it is finite and let $p=max\{l\in\mathbb{N}:f_1^{lj}(U_j)\cap V_j\neq\emptyset,\forall j=1,2,...,m\}$. Put $$U_{2(i-1)p+j}'=U_j,V_{2(i-1)p+j}'=V_j$$ where $i=1,2,...,m,j=1,2,...,2p$.
Since $f_{1,\infty}$ is multi-transitive, for the above nonempty open subsets $U_1',U_2',...,U_{2mp}';V_1',V_2',...,V_{2mp}'$, there exist $k\in\mathbb{N}$ such that $f_1^{kh}(U_h')\cap V_h'\neq\emptyset$ for each $h=1,2,...,2mp$. It follows that $f_1^{2kpi}(U_j)\cap V_j\neq\emptyset$ for each $j=1,2,...,m$. Therefore $2kp\in \{l\in\mathbb{N}:f_1^{lj}(U_j)\cap V_j\neq\emptyset,\forall j=1,2,...,m\}$.
However, $2kp>p$, which is a contradiction to the definition of $p$.

\end{proof}

\begin{theorem}\label{weaklyinf}
Let $(X,f_{1,\infty})$ be a non-autonomous discrete system. If
$f_{1,\infty}$ is weakly mixing, then the set $\{n\in\mathbb{N}:f_1^{n}(U_j)\cap V_j\neq\emptyset,\forall j=1,2\}$ for any collection of nonempty open subsets $U_1,U_2;V_1,V_2$ of $X$, is infinite.
\end{theorem}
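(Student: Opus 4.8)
The plan is to reduce the statement to a single application of Lemma \ref{yinlijmaa2} on the product system, after first establishing that weak mixing rules out isolated points. The starting observation is that the set in question is exactly a return-time set of the product non-autonomous system $(X\times X, f_{1,\infty}\times f_{1,\infty})$, where $f_{1,\infty}\times f_{1,\infty}=\{f_n\times f_n\}_{n=1}^\infty$. Indeed, since $(f_n\times f_n)\circ\cdots\circ(f_1\times f_1)=f_1^n\times f_1^n$, for the rectangles $U_1\times U_2$ and $V_1\times V_2$ we have
$$N_{f_{1,\infty}\times f_{1,\infty}}(U_1\times U_2,\,V_1\times V_2)=\{n\in\mathbb N:f_1^n(U_1)\cap V_1\neq\emptyset\ \text{and}\ f_1^n(U_2)\cap V_2\neq\emptyset\},$$
which is precisely the set we must show is infinite. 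Moreover, by the definition of weak mixing, $f_{1,\infty}$ is weakly mixing if and only if $f_{1,\infty}\times f_{1,\infty}$ is transitive, since rectangles form a basis of $X\times X$ and so the four-set condition is equivalent to transitivity of the product. Thus it remains to prove that this particular return-time set of the transitive system $f_{1,\infty}\times f_{1,\infty}$ is infinite, and the natural tool for that is Lemma \ref{yinlijmaa2} --- provided $X\times X$ has no isolated points.

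The key step, and the only place where the strength of weak mixing (as opposed to mere transitivity) is used, is to show that \emph{a weakly mixing system on a space with at least two points has no isolated points}. This is exactly what fails for transitivity, as Example \ref{li4} illustrates, and it is also the feature that distinguishes this argument from the padding proof of Theorem \ref{mildlyinf}: multi-transitivity involves the different iterates $f_1^{jl}$ and does not force perfectness, whereas weak mixing constrains a single iterate $f_1^n$. Suppose $x_0\in X$ were isolated and $|X|\ge2$. Then $\{x_0\}$ and $W:=X\setminus\{x_0\}$ are both nonempty and open, so applying weak mixing to $U_1=V_1=U_2=\{x_0\}$ and $V_2=W$ yields some $n\in\mathbb N$ with $f_1^n(x_0)=x_0$ and simultaneously $f_1^n(x_0)\in W$, i.e. $f_1^n(x_0)\neq x_0$ --- a contradiction. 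Hence $X$ is perfect, and since a finite product of perfect spaces is perfect, $X\times X$ has no isolated points as well.

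With these two ingredients in place the proof closes immediately: $X\times X$ is a compact metric space without isolated points and $f_{1,\infty}\times f_{1,\infty}$ is transitive, so Lemma \ref{yinlijmaa2} applied to the open sets $U_1\times U_2$ and $V_1\times V_2$ shows that $N_{f_{1,\infty}\times f_{1,\infty}}(U_1\times U_2,V_1\times V_2)$ is infinite, which is the desired conclusion. The degenerate case $|X|=1$ is trivial, since then $f_1^n$ is the identity and the set equals all of $\mathbb N$. I expect the only subtle point to be the verification that weak mixing excludes isolated points; everything after that is routine once the problem is transported to the product system. A possible alternative, closer in spirit to the proof of Theorem \ref{mildlyinf}, would be a direct contradiction argument assuming the set is finite with maximum $p$ and then padding the data to force a larger common return time, but this appears to require weak mixing of higher order, so the product-plus-lemma route above is cleaner.
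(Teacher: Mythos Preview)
Your proof is correct and takes a genuinely different route from the paper's. Both arguments share the preliminary step of showing that weak mixing forces $X$ to be perfect (your version is in fact a bit tidier than the paper's two-case split). After that, however, the paper does \emph{not} pass to the product: it argues directly by contradiction, taking $p=\max\{n:f_1^n(U_j)\cap V_j\neq\emptyset,\ j=1,2\}$, using perfectness to carve each $V_j$ into $p+1$ pairwise disjoint open pieces, and then shrinking $U_1,U_2$ by continuity so that a single fresh application of (order-$2$) weak mixing produces a hitting time $m>p$. Your approach instead recognises the set as $N_{f_{1,\infty}\times f_{1,\infty}}(U_1\times U_2,V_1\times V_2)$, observes that weak mixing is precisely transitivity of the product, and invokes Lemma~\ref{yinlijmaa2} once $X\times X$ is seen to be perfect. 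This is shorter and conceptually cleaner; the paper's argument has the minor advantage of being self-contained (it does not quote the lemma), but otherwise your reduction is the more efficient path.

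One small inaccuracy worth noting: in your closing remark you suggest that the direct ``padding'' alternative would require weak mixing of higher order. It does not---the paper's version uses only order-$2$ weak mixing, applied to the shrunken pair $U_1',U_2'$ and suitably chosen sub-targets $V_1^{j_1},V_2^{j_2}$. This does not affect the validity of your main argument.
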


\begin{proof}
Firstly, we show that $X$ has no isolated point. If $X$ contains two isolated points $a,b$ at least. Then take $U_1=U_2=V_1=\{a\},V_2=\{b\}$. Since $f_{1,\infty}$ is weakly mixing, there exist $n\in\mathbb{N}$ such that $f_1^{n}(U_j)\cap V_j\neq\emptyset,\forall j=1,2$. That is, $f_1^{n}(a)=a,f_1^{n}(a)=b$, which is a contradiction.
If $X$ contains only one isolated point $a$, then take $U_1=U_2=V_1=\{a\}$, and let $V_2$ be a nonempty open subset of $X$ such that $a\notin V_2$. Since $f_{1,\infty}$ is weakly mixing, there exist $n\in\mathbb{N}$ such that $f_1^{n}(U_j)\cap V_j\neq\emptyset,\forall j=1,2$. That is, $f_1^{n}(a)=a,f_1^{n}(a)=b$, where $b\in V_2$, which is a contradiction. Therefore, $X$ has no isolated point.

Next, suppose that the set is finite and let $p=max\{n\in\mathbb{N}:f_1^{n}(U_j)\cap V_j\neq\emptyset,\forall j=1,2\}$.
Since $X$ has no isolated point, $V_1$ and $V_2$ contain infinitely many points, thus we can take a collection $V_1^1,V_1^2,...,V_1^{p+1}\subseteq V_1$ and $V_2^1,V_2^2,...,V_2^{p+1}\subseteq V_2$ such that them are mutually disjoint open sets. Since $p\in\{n\in\mathbb{N}:f_1^{n}(U_j)\cap V_j\neq\emptyset,\forall j=1,2\}$, there exist $x_1\in U_1$ and $x_2\in U_2$ such that $f_1^{p}(x_1)\in V_1$ and $f_1^{p}(x_2)\in V_2$. Now, there exist $j_1,j_2\in\{1,2,...,p+1\}$ such that $f_1^{i}(x_1)\notin V_{j_1}$ and $f_1^{i}(x_2)\notin V_{j_2}$ for all $i=1,2,...,p$ due to the choice of $V_1^1,V_1^2,...,V_1^{p+1},V_2^1,V_2^2,...,V_2^{p+1}$. By continuity, there are open neighborhoods $U_1'$ of $x_1$ contained in $U_1$ and $U_2'$ of $x_2$ contained in $U_2$ such that
\begin{equation}\label{weakly1}
f_1^{i}(U_1')\cap V_{j_1}=\emptyset \quad\text{and} \quad f_1^{i}(U_2')\cap V_{j_2}=\emptyset, \quad\forall i=1,2,...,p.
\end{equation}
Since $f_{1,\infty}$ is weakly mixing, there exist $m\in\mathbb{N}$ such that
$$f_1^{m}(U_1')\cap V_{j_1}\neq\emptyset\quad\text{and} \quad f_1^{m}(U_2')\cap V_{j_2}\neq\emptyset.$$
Further,
$$f_1^{m}(U_1)\cap V_{1}\neq\emptyset\quad\text{and} \quad f_1^{m}(U_2)\cap V_{2}\neq\emptyset.$$
By (\ref{weakly1}), we have $m>p$, which is a contradiction to the definition of $p$.

\end{proof}

The next result answers open problem 1 of \cite{wx1}.
\begin{theorem}
Let $(X,f_{1,\infty})$ be a minimal non-autonomous system with $f_{1,\infty}$ being feeble open and surjective and converging uniformly to a map $f$. If $\{f_n^{n+k}\}_{k\in\mathbb{N}}$ converges collectively to $\{f^k\}_{k\in\mathbb{N}}$, then $(X,f_{1,\infty})$ is multi-transitive $\Rightarrow$ $(X,f_{1,\infty})$ is Li-Yorke chaos.
\end{theorem}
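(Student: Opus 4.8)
The plan is to verify the hypotheses of Lemma~\ref{lychaos}. Since a multi-transitive system is in particular transitive and (as I argue below) weakly mixing, the opening argument of the proof of Theorem~\ref{weaklyinf} applies to show that $X$ has no isolated point; being a nonempty perfect compact metric space, $X$ is then uncountable and every nonempty open set is infinite. I would fix two distinct points $a\neq b$ and radii $\eta_1>\eta_2>\cdots\to 0$ with $\overline{B(a,\eta_1)}\cap\overline{B(b,\eta_1)}=\emptyset$, and set $A_i=\overline{B(a,\eta_i)}$, $B_i=\overline{B(b,\eta_i)}$, so that $\{A_i\}$ and $\{B_i\}$ are decreasing sequences of compact sets with $\bigcap_i A_i=\{a\}$ and $\bigcap_i B_i=\{b\}$ and $A_i\cap B_j=\emptyset$ for all $i,j$. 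It then remains to produce a single sequence $p_1<p_2<\cdots$ of times and, for every itinerary $c=C_1C_2\cdots$ with $C_k\in\{A_k,B_k\}$, a point $x_c$ with $f_1^{p_k}(x_c)\in C_k$ for all $k$.

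The engine of the construction is a \emph{common-time} simultaneous hitting property: for any finite family of pairs of nonempty open sets $(P_1,Q_1),\dots,(P_t,Q_t)$ there are infinitely many $n$ with $f_1^{n}(P_r)\cap Q_r\neq\emptyset$ for every $r$. This is exactly weak mixing of all orders together with infinitude of the common return set. I would derive weak mixing of all orders from multi-transitivity by using minimality together with the surjectivity, feeble openness and collective/uniform convergence hypotheses; this is the sufficient condition under which, for minimal systems, multi-transitivity and weak mixing of all orders coincide (the positive result underlying \cite{wx1}), and alternatively one re-derives it by a Furstenberg-type gap-filling argument in which Lemma~\ref{yizhi} supplies the uniform equicontinuity of short compositions that replaces the iterate-commutation available in the autonomous setting. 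Infinitude of the common hitting set then follows exactly as in the proof of Theorem~\ref{weaklyinf}, whose argument (using that $X$ has no isolated point) generalizes verbatim from two pairs to any finite number of pairs.

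With this engine in hand I would build, by induction on word length, a family of nonempty open sets $\{O_w\}$ indexed by finite words $w\in\{A,B\}^{<\omega}$ and an increasing sequence $p_1<p_2<\cdots$ such that, writing $C_i^{w}\in\{A_i,B_i\}$ for the target dictated by the $i$-th letter of $w$, one has: (i) $\overline{O_{wA}},\overline{O_{wB}}\subseteq O_w$ with $\overline{O_{wA}}\cap\overline{O_{wB}}=\emptyset$; and (ii) $f_1^{p_i}(\overline{O_w})\subseteq\mathrm{int}(C_i^{w})$ for every $i\le|w|$. At level $k$ I apply the common-time hitting property to the $2^{k-1}$ sets $\{O_w:|w|=k-1\}$ against the two targets $\mathrm{int}(A_k)$ and $\mathrm{int}(B_k)$ (that is, $2^k$ pairs) to obtain one time $p_k>p_{k-1}$; for each $w$ this yields points $y_A,y_B\in O_w$ with $f_1^{p_k}(y_A)\in\mathrm{int}(A_k)$, $f_1^{p_k}(y_B)\in\mathrm{int}(B_k)$, whence $y_A\neq y_B$, and continuity of the single map $f_1^{p_k}$ lets me shrink them to disjoint open children $O_{wA}\ni y_A$, $O_{wB}\ni y_B$ inside $O_w$ satisfying (i)-(ii), condition (ii) for $i<k$ being inherited from $O_w$. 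Here I only use continuity together with the fact that $X$ has no isolated point.

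Finally, for each infinite itinerary $c$ the nested compacts $\overline{O_{c|_1}}\supseteq\overline{O_{c|_2}}\supseteq\cdots$ have nonempty intersection, and any $x_c$ in it satisfies $f_1^{p_k}(x_c)\in C_k$ for all $k$ by (ii); Lemma~\ref{lychaos} then delivers Li-Yorke chaos. I expect the main obstacle to be the engine of the second paragraph, namely upgrading the \emph{diagonal} hitting furnished by multi-transitivity (which synchronizes $f_1^{l},f_1^{2l},\dots,f_1^{ml}$ only at the staggered times $jl$) to genuine common-time hitting. In the non-autonomous setting there is no single map to iterate, so the naive attempt to slide a hit at time $jl$ forward to time $ml$ would require equicontinuity of \emph{long} compositions, which Lemma~\ref{yizhi} does not provide; circumventing this is precisely where minimality and the collective-convergence hypothesis must enter, and confirming that together they force weak mixing of all orders is the crux of the argument.
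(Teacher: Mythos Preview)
Your proposal is correct and follows essentially the same route as the paper: both derive weak mixing of all orders from multi-transitivity via \cite[Theorem~4.1]{wx1} (using minimality together with the feeble-open, surjective and collective-convergence hypotheses), extend the argument of Theorem~\ref{weaklyinf} to obtain infinitely many common hitting times for any finite family of pairs, and then run the standard inductive construction feeding into Lemma~\ref{lychaos}. The only cosmetic difference is that you organize the induction as an explicit Cantor tree of nested open sets $O_w$ and intersect their closures, whereas the paper records a finite set $S_k$ of points with small neighborhoods $W_x$ at each stage and extracts $x_c$ as a limit point in a fixed compact $\overline{U_0}$.
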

\begin{proof}
Let $a,b\in X$ with $a\neq b$. Take any a nonempty open set $U_0\subseteq X$ such that $\overline{U_0}$ is compact. Since $(X,f_{1,\infty})$ is multi-transitive and minimal, so by \cite[Theorem 4.1]{wx1} $f_{1,\infty}$ is weakly mixing of all orders. Thus, there exists $p_1>0$ such that
$$f_1^{p_1}(U_0)\cap B(a,1)\neq\emptyset \quad\text{and}\quad f_1^{p_1}(U_0)\cap B(b,1)\neq\emptyset .$$
Thus, we can find points $x_1,x_2\in U_0$ such that $f_1^{p_1}(x_1)\in B(a,1),f_1^{p_1}(x_2)\in B(b,1)$. Now, assume that there exist positive integers $p_1<p_2<\cdots<p_k$ such that for each finite sequence $A_1A_2\cdots A_k$, where $A_i\in\{B(a,\frac{1}{i}),B(b,\frac{1}{i})\}(i=1,2,...,k)$, there is a point $x\in U_0$ satisfying $f_1^{p_i}(x)\in A_i$ for $i=1,2,...,k$. Denote by $S_k$ the set of all such points. By continuity of $f_{1,\infty}$, each $x\in S_k$ has an open nonempty neighborhood $W_x\subseteq U_0$ such that $f_1^{p_i}(W_x)\subseteq A_i$. Since $f_{1,\infty}$ is weakly mixing, so by Theorem \ref{weaklyinf}, there exists $p_{k+1}>p_k$ such that for each $x\in S_k$,
$$f_1^{p_{k+1}}(W_x)\cap B(a,\frac{1}{k+1})\neq\emptyset\quad \text{and}\quad f_1^{p_{k+1}}(W_x)\cap B(b,\frac{1}{k+1})\neq\emptyset .$$
Thus, by induction, we know that there exists a sequence of positive integers $\{p_k\}_{k=1}^{\infty}$ such that for any finite sequence $A_1A_2\cdots A_k$, there exists a point $x\in U_0$ satisfying $f_1^{p_i}(x)\in A_i$ for all $i=1,2,...,k$. Let $C=C_1C_2\cdots$ be an infinite sequence, where $C_k\in\{\overline{B(a,\frac{1}{k})},\overline{B(b,\frac{1}{k})}\}$. For each $k$, we can take a point $x_k\in U_0$ such that $f_1^{p_i}(x_k)\in C_i$ for all $i=1,2,...,k$. Since $\overline{U_0}$ is compact, the infinite sequence $\{x_k\}_{k=1}^{\infty}$ has a limit point in $\overline{U_0}$, say $x_c$. It is not difficult to show that for each $k=1,2,...$ $f_1^{p_k}(x_c)\in C_k$. Thus, by lemma \ref{lychaos}, $(X,f_{1,\infty})$ is Li-Yorke chaos.

\end{proof}

It is well known that mixing is strictly stronger than mildly mixing for autonomous discrete dynamical system.  However, the theorem below shows that the result is just the opposite for non-autonomous discrete dynamical system, which is interesting and magical.
\begin{theorem}\label{mild}
Let $(X,f_{1,\infty})$ be a non-autonomous system. If $f_{1,\infty}$ is mildly mixing, then $f_{1,\infty}$ is mixing. Converse is true if we consider the system without isolated points.
\end{theorem}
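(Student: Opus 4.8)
The plan is to prove the two implications separately: the forward direction (mildly mixing $\Rightarrow$ mixing) by contraposition, constructing a witnessing transitive system, and the converse by a direct argument invoking Lemma \ref{yinlijmaa2}.

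For the forward direction I would argue the contrapositive: assuming $f_{1,\infty}$ is not mixing, I produce a transitive system $(Y,g_{1,\infty})$ for which $(X\times Y, f_{1,\infty}\times g_{1,\infty})$ fails to be transitive. Failure of mixing yields nonempty open sets $U,V\subseteq X$ such that the bad set $T=\{n\in\mathbb{N}: f_1^n(U)\cap V=\emptyset\}$ is infinite; fix any $t\in T$. The heart of the argument is to build, on the two-point discrete space $Y=\{1,2\}$, a transitive non-autonomous system whose return-time set for one chosen pair of points is exactly $\{t\}$. Concretely I would set $g_n(1)=1$ for $n\neq t$ and $g_t(1)=2$, together with $g_n(2)=2$ for $n\neq t+1$ and $g_{t+1}(2)=1$. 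A short orbit computation shows $g_1^n(1)=2$ if and only if $n=t$, so $N_{g_{1,\infty}}(\{1\},\{2\})=\{t\}$, while checking the finitely many pairs of nonempty subsets of $\{1,2\}$ shows $(Y,g_{1,\infty})$ is transitive. Then, for the nonempty open sets $U\times\{1\}$ and $V\times\{2\}$ in $X\times Y$, any $n$ with $g_1^n(\{1\})\cap\{2\}\neq\emptyset$ must equal $t\in T$, whence $f_1^n(U)\cap V=\emptyset$; thus the product is not transitive and $f_{1,\infty}$ is not mildly mixing.

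For the converse, suppose $f_{1,\infty}$ is mixing and let $(Y,g_{1,\infty})$ be any transitive system without isolated points. Given basic nonempty open sets $U_1\times U_2$ and $V_1\times V_2$ in $X\times Y$, mixing of $f_{1,\infty}$ gives $N\in\mathbb{N}$ with $f_1^n(U_1)\cap V_1\neq\emptyset$ for all $n\geq N$, so $N_{f_{1,\infty}}(U_1,V_1)$ is cofinite. Since $Y$ has no isolated points and $(Y,g_{1,\infty})$ is transitive, Lemma \ref{yinlijmaa2} shows $N_{g_{1,\infty}}(U_2,V_2)$ is infinite, hence contains some $n\geq N$. For this $n$ we have simultaneously $f_1^n(U_1)\cap V_1\neq\emptyset$ and $g_1^n(U_2)\cap V_2\neq\emptyset$, so $(f_1^n\times g_1^n)(U_1\times U_2)\cap(V_1\times V_2)\neq\emptyset$. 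As basic open sets suffice to test transitivity, the product is transitive and $f_{1,\infty}$ is mildly mixing.

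I expect the forward direction to be the main obstacle: everything reduces to exhibiting a transitive non-autonomous system whose return-time set for some pair of open sets is a prescribed (here singleton) subset of $T$, and it is precisely the freedom to let the maps $g_n$ vary with $n$ that makes this possible, even though it would be impossible for an autonomous system. This is exactly the mechanism that reverses the usual autonomous hierarchy. I would also flag the role of the no-isolated-points hypothesis in the converse: it is needed so that Lemma \ref{yinlijmaa2} upgrades the single return time guaranteed by transitivity of $Y$ into infinitely many, which is what lets us meet the cofinite return-time set coming from mixing; without it a system such as that in Example \ref{li4} provides a transitive $Y$ with a one-point return set, against which mixing of $X$ cannot be synchronized.
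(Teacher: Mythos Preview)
Your proof is correct. The converse is essentially the paper's argument: both invoke Lemma~\ref{yinlijmaa2} on the test system $(Y,g_{1,\infty})$ to turn transitivity into an infinite return-time set and then intersect with the cofinite set furnished by mixing of $f_{1,\infty}$; your closing remark about Example~\ref{li4} likewise mirrors the paper's.

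The forward direction takes a genuinely different route. The paper builds its witness on the two-sided full shift $\Sigma_2$: from the bad set it extracts a sequence $\{n_k\}$ with $n_{k+1}-n_k>2$, sets $g_{n_k}=\sigma^{n_k}$, $g_{n_k+1}=\sigma^{-n_k}$, and $g_i=id$ otherwise, so that $g_1^m=id$ for $m\notin\{n_k\}$ while $g_1^{n_k}=\sigma^{n_k}$; transitivity of $g_{1,\infty}$ then comes from mixing of $\sigma$, and a pair of disjoint open sets in $\Sigma_2$ witnesses failure of the product. Your two-point construction is more elementary and uses only a single bad time $t$; in fact it implicitly proves more than the theorem asks, since the same argument shows that mildly mixing forces $f_1^t(U)\cap V\neq\emptyset$ for \emph{every} nonempty open $U,V$ and \emph{every} $t\ge 1$. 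The one thing the paper's route buys that yours does not is that its witness $(\Sigma_2,g_{1,\infty})$ lives on a space without isolated points, so the paper's proof simultaneously shows that the implication mildly mixing $\Rightarrow$ mixing persists even if one restricts the definition of mildly mixing to test only against transitive systems on perfect spaces; your two-point witness cannot give that refinement. For the theorem as stated, however, your argument is shorter and cleaner.
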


\begin{proof}
Suppose that $f_{1,\infty}$ is mildly mixing. If $f_{1,\infty}$ is not mixing, then there exist two nonempty open sets $U,V$ of $X$ such that for any $N\in\mathbb{N}$ there is some $n>N$ satisfying $f_1^n(U)\cap V=\emptyset$. Further, there exists a sequence of positive integers $\{n_k\}_{k=1}^{\infty}$ such that $f_1^{n_k}(U)\cap V=\emptyset$ and $n_{k+1}-n_k>2$ for any $k\in\mathbb{N}$. Besides, we can assume that the above $U,V$ satisfy $U\cap V=\emptyset$, if not, we can choose the nonempty open sets $U'\subseteq U,V'\subseteq V$ such that $U'\cap V'=\emptyset$, denote $U=U',V=V'$. Let $\Sigma_2$ and $\sigma$ be defined in Example \ref{li1}. Consider the non-autonomous discrete systems $(\Sigma_2,g_{1,\infty})$, where $g_{1,\infty}$ is defined by
\[g_i=\begin{cases}
\sigma^{n_k}, &  \text{if}\quad i=n_k, \\
\sigma^{-n_k}, &  \text{if}\quad i=n_k+1,  \\
id, &  else,
\end{cases}\]
where $ k\in \mathbb{N}$, that is,
$$g_{1,\infty}=\{id,id,...,id,\sigma^{n_1},\sigma^{-n_1},id,id,...,id,\sigma^{n_2},\sigma^{-n_2},id,id,...,id,\sigma^{n_k},\sigma^{-n_k},id,...\}.$$
With the similar argument in Example \ref{li1}, $g_{1,\infty}$ is transitive. However, we have that
\[\begin{cases}
f_1^{m}(U)\cap V=\emptyset, &  if\quad m\in\{n_k\}_{k=1}^{\infty}, \\
g_1^{m}(U)\cap V=\emptyset, &  if\quad m\notin\{n_k\}_{k=1}^{\infty}.
\end{cases}\]
Then one can get that $f_{1,\infty}^m\times g_{1,\infty}^m(U\times U,V\times V)=\emptyset$ for any $m\in\mathbb{N}$. Therefore, $f_{1,\infty}\times g_{1,\infty}$ is not transitive, that is, $f_{1,\infty}$ is not mildly mixing, which is a contradiction to the assumption.

Conversely, since the system has no isolated point, so by lemma \ref{yinlijmaa2}, the set $N_{g_{1,\infty}}(U,V)$ is infinite. So it is clear that mixing implies mildly mixing by definitions.

For general compact metric space, let $g_{1,\infty}$ be non-autonomous discrete system defined in Example \ref{li4}. Then it is easy to see that $g_{1,\infty}$ is transitive. Let $f_{1,\infty}=\{f,f,f,...\}$, where $f$ is mixing in autonomous discrete system. Then $f_{1,\infty}$ is also mixing. But it is obvious that $f_{1,\infty}\times g_{1,\infty}$ is not mildly mixing.
\end{proof}

A straightforward consequence of the previous result is the following corollary, which answers open problem 2 in \cite{wx1}.

\begin{corollary}
Let $(X,f_{1,\infty})$ be a non-autonomous discrete system. If
$f_{1,\infty}$ is mildly mixing, then $f_{1,\infty}$ is multi-transitive.
\end{corollary}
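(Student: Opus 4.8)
The plan is to obtain multi-transitivity directly from mixing, so that the corollary becomes a one-line consequence of the previously established Theorem \ref{mild}. First I would invoke that theorem: since $f_{1,\infty}$ is mildly mixing, it is mixing. It therefore suffices to prove the general implication that every mixing non-autonomous system is multi-transitive, and I would use the open-set characterization of multi-transitivity from Definition 2.3 (the existence, for each $m$ and each collection of open sets, of a single $l$ with $f_1^{jl}(U_j)\cap V_j\neq\emptyset$ for all $j$).

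For the main step, I would fix $m\in\mathbb{N}$ and an arbitrary collection of nonempty open sets $U_1,\dots,U_m;V_1,\dots,V_m$. Applying the definition of mixing to each pair $(U_j,V_j)$ separately yields $N_j\in\mathbb{N}$ with $f_1^{n}(U_j)\cap V_j\neq\emptyset$ for every $n\geq N_j$. I would then set $l=\max\{N_1,\dots,N_m\}$. The key observation is that for each $j\in\{1,\dots,m\}$ one has $jl\geq l\geq N_j$, so, reading the mixing conclusion at the index $n=jl$, we get $f_1^{jl}(U_j)\cap V_j\neq\emptyset$ simultaneously for all $j$. This exhibits the common $l$ demanded by the definition, so $f_{1,\infty}$ is multi-transitive.

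There is no genuine obstacle here; the only point to watch is the bookkeeping in the definition of multi-transitivity, namely that the $j$-th coordinate evolves under the $j$-th iterate $f_{1,\infty}^{[j]}$, so composing it $l$ times from the start produces exactly $f_1^{jl}$, and the \emph{same} $l$ must serve every factor. Mixing supplies this uniformity for free: once the index exceeds the threshold $N_j$ the intersection is nonempty for all larger indices, and the dilation $n=jl\geq l$ only pushes the index further into the guaranteed range. Consequently no quantitative estimate or compactness argument is needed, and the corollary follows immediately once Theorem \ref{mild} is in hand.
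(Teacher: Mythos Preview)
Your argument is correct and is exactly the route the paper takes: the corollary is stated as a ``straightforward consequence'' of Theorem~\ref{mild} with no further proof, and you have simply written out the easy step mixing $\Rightarrow$ multi-transitive that the paper leaves implicit. Nothing to add.
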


The example below disproves Theorem 2.3 and Theorem 2.4 given by Sharma and Raghav\cite{q36}. It is to be noted that the proof of \cite[Theorem 4.1]{wx1} cited the \cite[Theorem 2.3]{q36}. We add a condition to obtain the results and reprove them in the following two theorems.
\begin{theorem}\cite[Theorem 2.3]{q36}
Let $(X,f_{1,\infty})$ be a non-autonomous system with $f_{1,\infty}$ being feeble open and surjective and converging uniformly to a map $f$. If $\{f_n^{n+k}\}_{k\in\mathbb{N}}$ converges collectively to $\{f^k\}_{k\in\mathbb{N}}$, then $(X,f)$ is minimal $\Leftrightarrow$ $(X,f_{1,\infty})$ is minimal.
\end{theorem}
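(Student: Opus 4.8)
The plan is to prove the two implications separately, in both cases exploiting the single identity $f_1^{r-1+k}=f_r^{k}\circ f_1^{r-1}$ together with the collective convergence $D(f_r^k,f^k)<\varepsilon$ (valid for all $r\ge r_0$ and all $k$), which lets one replace a late window $f_r^k$ of the non-autonomous system by the iterate $f^k$ of the limit map up to an error $\varepsilon$ that is uniform in $k$.

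For the implication $(X,f)$ minimal $\Rightarrow (X,f_{1,\infty})$ minimal, fix $x\in X$ and a nonempty open $V$; choose $y_0\in V$ and $\varepsilon>0$ with $B(y_0,2\varepsilon)\subseteq V$, and pick $r_0$ from collective convergence for this $\varepsilon$. Put $z=f_1^{r_0-1}(x)$. Since $(X,f)$ is minimal, the $f$-orbit of $z$ is dense, so $f^{k}(z)\in B(y_0,\varepsilon)$ for some $k\ge 0$; then $d(f_{r_0}^{k}(z),f^{k}(z))\le D(f_{r_0}^{k},f^{k})<\varepsilon$ gives $f_{r_0}^{k}(z)\in B(y_0,2\varepsilon)\subseteq V$, and $f_{r_0}^{k}(z)=f_1^{r_0-1+k}(x)$ lies on the orbit of $x$. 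As $x$ and $V$ were arbitrary, $(X,f_{1,\infty})$ is minimal. Note that here a single visit of the $f$-orbit to $B(y_0,\varepsilon)$ suffices, so neither surjectivity nor feeble openness is needed for this direction.

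For the converse, assume $(X,f_{1,\infty})$ is minimal. I would first record the auxiliary fact that, on a space with no isolated points, every orbit of a minimal non-autonomous system meets every nonempty open set infinitely often: if the orbit of $w$ met $V'$ only up to time $T$, then $\overline{orb(w,f_{1,\infty})}\subseteq\{f_1^0(w),\dots,f_1^T(w)\}\cup(X\setminus V')$, and since the right-hand side is a proper closed set (a finite set together with the complement of the infinite open set $V'$) this would contradict $\overline{orb(w,f_{1,\infty})}=X$. Now fix $x$, $V$, $y_0$, $\varepsilon$ and $r_0$ as above. Since each $f_i$ is surjective, so is $f_1^{r_0-1}$, and we may choose $w$ with $f_1^{r_0-1}(w)=x$. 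By the auxiliary fact the orbit of $w$ visits $V'=B(y_0,\varepsilon)$ infinitely often, hence at some time $n\ge r_0-1$; writing $k=n-(r_0-1)\ge 0$ we get $f_{r_0}^{k}(x)=f_1^{n}(w)\in B(y_0,\varepsilon)$, and collective convergence again yields $f^{k}(x)\in B(y_0,2\varepsilon)\subseteq V$. Thus the $f$-orbit of every $x$ is dense and $(X,f)$ is minimal.

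The main obstacle is entirely in the converse: because the maps depend on absolute time, the tail $\{f_{r_0}^{k}(x)\}_{k\ge 0}$ only becomes comparable to the $f$-iterates once $r_0$ is large, yet it is realized as the orbit of $w$ only after discarding the uncontrolled initial window $f_1^0(w),\dots,f_1^{r_0-2}(w)$. One therefore cannot make do with a single visit and must guarantee a visit at a late time, which is exactly the content of the infinite-visit fact. This is where the two delicate hypotheses enter: surjectivity is what lets $x$ be written as $f_1^{r_0-1}(w)$ so that its tail is a genuine tail of a full orbit --- Example \ref{li4}, where the maps are constant, shows the statement fails without it --- and the absence of isolated points is what forces the infinite-visit fact (and is exactly the standing hypothesis of Lemma \ref{yinlijmaa2}). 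I would explicitly flag that without perfectness the converse can fail even for surjective, feebly open, collectively convergent systems, so the honest setting for the statement is a perfect compact metric space.
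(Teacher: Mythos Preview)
Your diagnosis matches the paper's exactly: the paper does not prove this statement but disproves it (Example~\ref{li5} gives a surjective, feebly open, collectively convergent system on a three-point discrete space that is minimal while the limit map $id$ is not), and then states and proves the corrected version with the added hypothesis that $X$ has no isolated points (Theorem~\ref{min}) --- precisely the missing hypothesis you flag at the end. Your proof of the corrected converse is essentially the paper's (pull $x$ back through $f_1^{r_0}$ by surjectivity, argue that the preimage's orbit visits the target ball infinitely often, pick a late visit, then transfer via collective convergence and the triangle inequality); the only cosmetic difference is that you establish the infinite-visit fact by a closure argument, whereas the paper places $m+1$ pairwise disjoint open subsets inside the ball and forces $m+1$ distinct visit times.
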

\begin{theorem}\cite[Theorem 2.4]{q36}
Let $(X,f_{1,\infty})$ be a non-autonomous system with $f_{1,\infty}$ being feeble open and surjective and converging uniformly to a map $f$. If $\{f_n^{n+k}\}_{k\in\mathbb{N}}$ converges collectively to $\{f^k\}_{k\in\mathbb{N}}$, then $(X,f)$ is transitive $\Leftrightarrow$ $(X,f_{1,\infty})$ is transitive.
\end{theorem}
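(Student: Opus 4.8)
The plan is to prove the two implications separately. The direction $(X,f)$ transitive $\Rightarrow (X,f_{1,\infty})$ transitive rests on feeble-openness together with the collective convergence, while the reverse direction must first be routed through the tail systems $(X,f_{r,\infty})$, since the limit map $f$ is only approximated by tail compositions $f_r^k$ and never by the ``head'' composition $f_1^m$. A recurring device in both directions is to pre-fix the approximation scale \emph{before} invoking transitivity, exploiting the fact that collective convergence is uniform in $k$.

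For ($\Rightarrow$): fix nonempty open $U,V$. First choose a nonempty open $V_0$ with $\overline{V_0}\subseteq V$ and fix $\eta>0$ with $B(\overline{V_0},\eta)\subseteq V$; this step is what decouples the approximation scale from the data produced later. By collective convergence choose $r_0$ with $D(f_r^k,f^k)<\eta$ for all $r\geq r_0$ and all $k\in\mathbb{N}$. Since $f_{1,\infty}$ is feeble open, an induction on the composition shows $W:=int(f_1^{r_0-1}(U))$ is nonempty and open. Applying transitivity of $f$ to the pair $W,V_0$ yields $k\geq1$ and a point $w\in W$ with $f^k(w)\in V_0$. Because $D(f_{r_0}^k,f^k)<\eta$, the point $f_{r_0}^k(w)$ lies in $B(\overline{V_0},\eta)\subseteq V$; and as $w\in f_1^{r_0-1}(U)$ there is $u\in U$ with $f_1^{r_0-1}(u)=w$, so that $f_1^{r_0-1+k}(u)=f_{r_0}^k(w)\in V$, giving $f_1^{r_0-1+k}(U)\cap V\neq\emptyset$. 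Note that surjectivity is not needed here.

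For ($\Leftarrow$), the key auxiliary step is that each tail $(X,f_{r,\infty})$ is transitive. Given nonempty open $U,V$, surjectivity of each $f_i$ makes $f_1^{r-1}$ surjective, so $U':=(f_1^{r-1})^{-1}(U)$ is nonempty and open and satisfies $f_1^{r-1}(U')=U$. Transitivity of $f_{1,\infty}$ gives return times in $N_{f_{1,\infty}}(U',V)$, and by Lemma \ref{yinlijmaa2} (in the no-isolated-point setting) this set is infinite, so we may select $m\geq r$. Then $f_1^{m}(U')=f_r^{\,m-r+1}(f_1^{r-1}(U'))=f_r^{\,m-r+1}(U)$, whence $f_r^{\,m-r+1}(U)\cap V\neq\emptyset$, proving tail transitivity. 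To finish, fix open $U,V$, choose $V_0$ and $\eta$ as above, take $r_0$ from collective convergence for this $\eta$, and apply transitivity of the tail $(X,f_{r_0,\infty})$ to $U,V_0$ to obtain $k\geq1$ and $u\in U$ with $f_{r_0}^k(u)\in V_0$. Then $d(f^k(u),f_{r_0}^k(u))\leq D(f^k,f_{r_0}^k)<\eta$ forces $f^k(u)\in B(\overline{V_0},\eta)\subseteq V$, so $f^k(U)\cap V\neq\emptyset$ and $(X,f)$ is transitive.

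I expect the main obstacle to lie in the reverse direction, precisely because collective convergence only controls the tail compositions $f_r^k$: one cannot compare $f^n$ directly with the head map $f_1^m$, whose early factors may be far from $f$. This is why transitivity must be transported to the tail $(X,f_{r,\infty})$ first (using surjectivity and the preimage identity $f_1^{r-1}(U')=U$), and why the passage from the tail to $f$ only succeeds because the bound $D(f_r^k,f^k)<\eta$ holds uniformly in $k$. A secondary delicate point is extracting a return time $m\geq r$, which is where Lemma \ref{yinlijmaa2} and the absence of isolated points enter; the isolated-point situation is degenerate (a transitive compact system with an isolated point reduces to a single periodic orbit) and can be treated directly.
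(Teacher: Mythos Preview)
Your argument for both directions, in the setting where $X$ has no isolated points, is correct and is essentially the paper's own approach. The only cosmetic difference is that you isolate ``tail transitivity'' of $(X,f_{r,\infty})$ as a separate lemma, whereas the paper performs the same preimage-and-large-return-time computation inline: it sets $U=f_1^{-r_0}(B(x,\epsilon))$, invokes Lemma~\ref{yinlijmaa2} to pick a return time $\geq r_0$, and then uses the same triangle-inequality step against collective convergence. The forward direction likewise matches (the paper simply cites \cite{q36} for it).

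The genuine gap is your final sentence. You assert that the isolated-point case ``is degenerate (a transitive compact system with an isolated point reduces to a single periodic orbit) and can be treated directly.'' That reduction is an \emph{autonomous} fact and fails for non-autonomous systems: a transitive $(X,f_{1,\infty})$ with isolated points need not have any periodic structure at all, because the orbit is driven by a changing sequence of maps. In fact the statement you are trying to prove is \emph{false} without the no-isolated-point hypothesis. The paper's Example~\ref{li5} gives a three-point discrete space with $f_{1,\infty}=\{f,f,f,id,id,\dots\}$ where $f$ is a $3$-cycle: here $(X,f_{1,\infty})$ is transitive, feeble open, surjective, and converges collectively to the identity, yet $(X,id)$ is certainly not transitive. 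So your invocation of Lemma~\ref{yinlijmaa2} is not a removable technicality---it is exactly the extra hypothesis needed to salvage the theorem, and the paper's point is precisely that the quoted Theorem~2.4 of \cite{q36} is incorrect as stated and must be amended to Theorem~\ref{tt}.
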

\begin{example}\label{li5}

Let $(X=\{1,2,3\},d)$ be a discrete metric space with the discrete metric $d$. And let
\[f(x)=\begin{cases}
2, &   \text{if}\quad x=1, \\
3, &   \text{if}\quad x=2, \\
1, &   \text{if}\quad x=3,
\end{cases}\]
Consider the non-autonomous discrete system $(X,f_{1,\infty})$, where $f_{1,\infty}$ is defined by
$$f_{1,\infty}=\{f,f,f,id,id,id,...,id,...\}.$$
It is not difficult to see that $f_{1,\infty}$ is feebly open and surjective and that $f_{1,\infty}$ converges uniformly to the map $id$ and $\{f_n^{n+k}\}_{k\in\mathbb{N}}$ converges collectively to $\{id^k\}_{k\in\mathbb{N}}$.

It is easy to see that $f_{1,\infty}$ is minimal and transitive but $id$ is neither minimal nor transitive.  Moreover, for the non-empty open subsets $U=\{1\},V=\{2\}$ of $X$ and the point $1\in X$, the sets $\{n: f_1^n(1)\in V\}=\{1\}$ and $N_{f_{1,\infty}}(U,V)=\{1\}$ are both finite.

\end{example}

\begin{theorem}\label{min}
Suppose that $X$ has no isolate point. Let $(X,f_{1,\infty})$ be a non-autonomous system with $f_{1,\infty}$ being feeble open and surjective and converging uniformly to a map $f$. If $\{f_n^{n+k}\}_{k\in\mathbb{N}}$ converges collectively to $\{f^k\}_{k\in\mathbb{N}}$, then $(X,f)$ is minimal $\Leftrightarrow$ $(X,f_{1,\infty})$ is minimal.
\end{theorem}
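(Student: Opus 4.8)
The plan is to prove the two implications separately, isolating as an auxiliary step the fact that minimality of $(X,f_{1,\infty})$ passes to every tail system $(X,f_{r,\infty})$. This tail-minimality step is exactly where the added hypothesis that $X$ has no isolated point does the work, and it is precisely what the discrete Example \ref{li5} violates.

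First I would record the tail-minimality claim: if $(X,f_{1,\infty})$ is minimal, each $f_i$ is surjective, and $X$ is perfect, then $(X,f_{r,\infty})$ is minimal for every $r\geq 1$. By induction it suffices to pass from $f_{r,\infty}$ to $f_{r+1,\infty}$. Given $y\in X$, surjectivity of $f_r$ lets me write $y=f_r(x)$, and the composition identity $f_{r+1}^{k}(f_r(x))=f_r^{k+1}(x)$ shows that the $f_{r+1,\infty}$-orbit of $y$ equals the $f_{r,\infty}$-orbit of $x$ with its initial point deleted. Since in a space without isolated points every nonempty open set is infinite and a dense subset meets it in infinitely many points, deleting one point (hence finitely many) from a dense set leaves a dense set; so the orbit of $y$ remains dense. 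This is the one and only place that genuinely uses perfectness.

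For the implication $(X,f_{1,\infty})$ minimal $\Rightarrow (X,f)$ minimal, I would fix $y\in X$ and a ball $B(z,\varepsilon)\subseteq V$, use collective convergence to choose $r_0$ with $D(f_{r_0}^{k},f^{k})<\varepsilon/2$ for every $k$, and apply tail-minimality to find $k$ with $f_{r_0}^{k}(y)\in B(z,\varepsilon/2)$; the triangle inequality then forces $f^{k}(y)\in B(z,\varepsilon)\subseteq V$, so the $f$-orbit of $y$ is dense. For the converse, $(X,f)$ minimal $\Rightarrow (X,f_{1,\infty})$ minimal, I would again fix $y$ and $B(z,\varepsilon)\subseteq V$ and pick $r_0$ from collective convergence. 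Setting $y'=f_1^{r_0-1}(y)$ and invoking minimality of $f$ yields $m$ with $f^{m}(y')\in B(z,\varepsilon/2)$; since $D(f_{r_0}^{m},f^{m})<\varepsilon/2$ we get $f_{r_0}^{m}(y')\in B(z,\varepsilon)$, and the identity $f_{r_0}^{m}(f_1^{r_0-1}(y))=f_1^{m+r_0-1}(y)$ shows the $f_{1,\infty}$-orbit of $y$ meets $V$. Notably this direction needs neither surjectivity nor perfectness, only collective convergence together with minimality of $f$.

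The main obstacle is the forward direction, and specifically the justification of tail-minimality: the naive argument implicitly treats the tail orbits as automatically dense, which fails exactly when $X$ has isolated points, as Example \ref{li5} exhibits. Equally essential is the use of collective convergence rather than mere uniform convergence $f_n\to f$, since it is precisely this uniformity over $k$ that lets a \emph{single} $r_0$ control $D(f_{r_0}^{k},f^{k})$ for \emph{all} iteration lengths $k$ at once, preventing the compositions $f_{r_0}^{k}$ from drifting away from $f^{k}$ as $k\to\infty$; without it the approximation steps in both directions would break down.
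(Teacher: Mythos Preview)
Your argument is correct. The only difference from the paper is in how the key step is packaged for the direction $(X,f_{1,\infty})$ minimal $\Rightarrow (X,f)$ minimal. The paper does not formulate a tail-minimality lemma; instead, for a given $x$ it pulls back to some $z\in f_1^{-r_0}(x)$ via surjectivity and argues directly that the $f_{1,\infty}$-orbit of $z$ visits $B(y,\varepsilon/2)$ infinitely often (by splitting the ball into $m+1$ pairwise disjoint nonempty open pieces, which forces $m+1$ distinct visit times), hence at some time exceeding $r_0$. Your route isolates the equivalent fact as the reusable statement that each tail system $(X,f_{r,\infty})$ is minimal, with perfectness entering through the ``deleting one point from a dense set'' observation, and then works with the $f_{r_0,\infty}$-orbit of the given point directly. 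Conceptually these are the same use of the no-isolated-point hypothesis; your decomposition is slightly more modular and makes clearer why the hypothesis cannot be dropped. For the converse direction the paper simply cites Sharma--Raghav \cite{q36}; your short computation is essentially their argument, and your remark that neither surjectivity nor perfectness is needed there is accurate.
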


\begin{proof}
Necessity is proved in Theorem 2.3 by Sharma and Raghav\cite{q36}, we will prove sufficiency.
Let $(X,f_{1,\infty})$ be minimal and let $x\in X$. Let $y\in X$ and let $\varepsilon>0$ be fixed. As $\{f_n^{n+k}\}_{k\in\mathbb{N}}$ converges collectively to $\{f^k\}_{k\in\mathbb{N}}$, there exists $r_0\in\mathbb{N}$ such that $D(f_r^k,f^k) < \frac{\varepsilon}{2}$ for all $r\geq r_0$ and for every $k\in\mathbb{N}$, which implies $d(f_r^k(z),f^k(z)) < \frac{\varepsilon}{2}$ for all $r\geq r_0$ and for every $k\in\mathbb{N}$ and any $z\in X$. Since $(X,f_{1,\infty})$ is minimal, orbit of any $z\in f_1^{-r_0}(x)$(under $f_{1,\infty}$) is dense in $X$ and hence intersects $B(y,\frac{\varepsilon}{2})$. Next, we claim that the set $\{n\in\mathbb{N}\mid f_1^n(z)\in B(y,\frac{\varepsilon}{2})\}$ is infinite. If not, let $m=max\{n\in\mathbb{N}\mid f_1^n(z)\in B(y,\frac{\varepsilon}{2})\}$. Since $X$ has no isolate point, there exist $m+1$ non-empty open sets $U_1,U_2,...,U_{m+1}$ in $B(y,\frac{\varepsilon}{2})$ such that $U_i\cap U_j=\emptyset$ for any $i,j\in\{1,2,...,m+1\}$ with $i\neq j$. Since $(X,f_{1,\infty})$ is minimal, there exists $n_i\in \mathbb{N}$ such $f_1^{n_i}(z)\in U_i\subseteq B(y,\frac{\varepsilon}{2})$ for any $i\in\{1,2,...,m+1\}$. Besides, we can get that $n_i\neq n_j $ for any $i,j\in\{1,2,...,m+1\}$ with $i\neq j$. Hence, there exists $n_i>m$ such that $f_1^{n_i}(z)\in U_i\subseteq B(y,\frac{\varepsilon}{2})$, which is a contradiction to the definition of $m$. Now, as the set $\{n\in\mathbb{N}: f_1^n(z)\in B(y,\frac{\varepsilon}{2})\}$ is infinite, there exists $k\in \mathbb{N}$ such that $f_1^{r_0+k}(z)\in B(y,\frac{\varepsilon}{2})$. Hence by triangle inequality, $d(f^k(f_1^{r_0}(z)),y)<\varepsilon$ or $f^k(x)\in B(y,\varepsilon)$. As the proof holds for any choice of $\varepsilon$ and $y\in X$, the orbit of $x$(under $f$) is dense in $X$. As the proof holds for any $x\in X$, $(X,f)$ is minimal.
\end{proof}

\begin{theorem}\label{tt}
Suppose that $X$ has no isolate point. Let $(X,f_{1,\infty})$ be a non-autonomous system with $f_{1,\infty}$ being feeble open and surjective and converging uniformly to a map $f$. If $\{f_n^{n+k}\}_{k\in\mathbb{N}}$ converges collectively to $\{f^k\}_{k\in\mathbb{N}}$, then $(X,f)$ is transitive $\Leftrightarrow$ $(X,f_{1,\infty})$ is transitive.
\end{theorem}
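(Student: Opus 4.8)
The plan is to mirror the structure of Theorem~\ref{min}: cite Sharma and Raghav for the necessity direction, $(X,f)$ transitive $\Rightarrow$ $(X,f_{1,\infty})$ transitive, and prove the sufficiency direction $(X,f_{1,\infty})$ transitive $\Rightarrow$ $(X,f)$ transitive. The latter is precisely the implication that Example~\ref{li5} shows fails when $X$ has isolated points, so the no-isolated-point hypothesis must be doing the essential work.

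So assume $(X,f_{1,\infty})$ is transitive and fix nonempty open sets $U,V\subseteq X$; the goal is to produce $k\in\mathbb{N}$ with $f^k(U)\cap V\neq\emptyset$. First I would choose $y\in V$ and $\varepsilon>0$ with $B(y,\varepsilon)\subseteq V$, and use collective convergence of $\{f_n^{n+k}\}_{k\in\mathbb{N}}$ to $\{f^k\}_{k\in\mathbb{N}}$ to pick $r_0\in\mathbb{N}$ so that $d(f_r^k(z),f^k(z))<\frac{\varepsilon}{2}$ for all $r\geq r_0$, all $k\in\mathbb{N}$, and all $z\in X$. Next I would pull $U$ back along $f_1^{r_0}$: since each $f_i$ is surjective, $f_1^{r_0}$ is surjective, so $\widetilde{U}:=(f_1^{r_0})^{-1}(U)$ is a nonempty open set with $f_1^{r_0}(\widetilde{U})\subseteq U$.

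Now apply transitivity to the pair $(\widetilde{U},B(y,\frac{\varepsilon}{2}))$, but in a way that forces the transition time to exceed $r_0$. This is where the hypothesis enters: because $X$ has no isolated points and $f_{1,\infty}$ is transitive, Lemma~\ref{yinlijmaa2} guarantees that $N_{f_{1,\infty}}(\widetilde{U},B(y,\frac{\varepsilon}{2}))$ is infinite, hence contains some $n>r_0$. Writing $n=r_0+k$ with $k\geq1$, pick $w\in\widetilde{U}$ with $f_1^{\,n}(w)\in B(y,\frac{\varepsilon}{2})$. Using the composition identity $f_1^{\,r_0+k}=f_{r_0+1}^{\,k}\circ f_1^{\,r_0}$ and setting $u:=f_1^{\,r_0}(w)\in U$, I get $f_{r_0+1}^{\,k}(u)=f_1^{\,n}(w)\in B(y,\frac{\varepsilon}{2})$. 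Collective convergence (with $r=r_0+1\geq r_0$) gives $d(f_{r_0+1}^{\,k}(u),f^k(u))<\frac{\varepsilon}{2}$, so the triangle inequality yields $f^k(u)\in B(y,\varepsilon)\subseteq V$. Since $u\in U$, this proves $f^k(U)\cap V\neq\emptyset$, and as $U,V$ were arbitrary, $(X,f)$ is transitive.

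The main obstacle, and the whole reason for the extra hypothesis, is forcing the transition index $n$ to be larger than the convergence threshold $r_0$: only then can $f_1^{\,n}$ be split as $f_{r_0+1}^{\,k}\circ f_1^{\,r_0}$ and its tail $f_{r_0+1}^{\,k}$ be replaced by $f^k$ up to $\varepsilon$. A single small transition time is useless here, which is exactly the pathology in Example~\ref{li5}, where $N_{f_{1,\infty}}(U,V)=\{1\}$. The no-isolated-point condition, through Lemma~\ref{yinlijmaa2}, upgrades transitivity to infinitely many transition times and removes this obstruction. Everything else (surjectivity to make $\widetilde{U}$ nonempty, the composition identity, and the triangle estimate) is routine bookkeeping.
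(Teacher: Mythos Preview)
Your proposal is correct and follows essentially the same route as the paper: cite Sharma--Raghav for necessity, and for sufficiency pull the source set back through $f_1^{r_0}$, use Lemma~\ref{yinlijmaa2} (enabled by the no-isolated-point hypothesis) to get a transition time exceeding $r_0$, then split $f_1^{r_0+k}$ as $f_{r_0+1}^{k}\circ f_1^{r_0}$ and replace the tail by $f^k$ via collective convergence and the triangle inequality. Your write-up is in fact a bit more careful than the paper's (you make the composition identity and the use of $r=r_0+1\geq r_0$ explicit), but the idea is identical.
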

\begin{proof}
Necessity is proved in Theorem 2.4 by Sharma and Raghav\cite{q36}, we will prove sufficiency.
Let $\epsilon>0$ be given and let $B(x,\epsilon)$ and $B(y,\epsilon)$ be two non-empty open sets in $X$. As $\{f_n^{n+k}\}_{k\in\mathbb{N}}$ converges collectively to $\{f^k\}_{k\in\mathbb{N}}$, there exists $r_0\in\mathbb{N}$ such that $D(f_r^k,f^k) < \frac{\varepsilon}{2}$ for all $r\geq r_0$ and for every $k\in\mathbb{N}$. By Lemma \ref{yinlijmaa2} and transitivity of $(X,f_{1,\infty})$, for any non-empty open sets $U,V$ the set $N_{f_{1,\infty}}(U,V)$ is infinite. Take $U=f_1^{-r_0}(B(x,\epsilon))$ and $V=S(y,\frac{\epsilon}{2})$. Then there exists $k$ such that $f_1^{r_0+k}(U)\cap V\neq\emptyset$. Consequently, there exists $u\in U$ such that $d(f_1^{r_0+k}(u),y)<\frac{\epsilon}{2}$. Hence by triangle inequality, $d(y,f^k(f_1^{r_0}(u)))<\epsilon$. As $f_1^{r_0}(u)\in B(x,\epsilon)$ we have $f^k(B(x,\epsilon))\cap B(y,\epsilon)\neq\emptyset$. As the proof holds for any choice of non-empty open sets $B(x,\epsilon)$ and $B(y,\epsilon)$, the system $(X,f)$ is transitive.
\end{proof}

Theorem \ref{min} and Theorem \ref{tt} show that minimality and transitivity coincide for the $(X,f_{1,\infty})$ and limiting system $(X,f)$, we will prove the result is also true for weakly mixing and total transitivity but is not true for mildly mixing. Besides, compared with Theorem \ref{min} and Theorem \ref{tt}, we can drop the condition '$X$ has no isolate point'.
\begin{theorem}\label{weakly}
Let $(X,f_{1,\infty})$ be a non-autonomous system with $f_{1,\infty}$ being feeble open and surjective and converging uniformly to a map $f$. If $\{f_n^{n+k}\}_{k\in\mathbb{N}}$ converges collectively to $\{f^k\}_{k\in\mathbb{N}}$, then $(X,f_{1,\infty})$ is weakly mixing if and only if $(X,f)$ is weakly mixing.
\end{theorem}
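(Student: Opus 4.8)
The plan is to prove both implications by adapting the scheme already used for Theorem~\ref{tt}, the only genuinely new feature being that weak mixing constrains two pairs of open sets at once, so I will everywhere need a single transition time that serves both pairs simultaneously. The bridge between the two systems is, as before, collective convergence: for $n>r_0$ one has $f_1^{n}=f_{r_0+1}^{\,n-r_0}\circ f_1^{r_0}$, and $D(f_{r_0+1}^{\,k},f^{k})$ is uniformly small once $r_0$ is large, so $f_1^{r_0+k}$ is uniformly close to $f^{k}\circ f_1^{r_0}$. I will combine this with Theorem~\ref{weaklyinf} in the forward direction and with feeble openness in the backward direction.

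For the implication that weak mixing of $(X,f_{1,\infty})$ forces weak mixing of $(X,f)$, fix nonempty open sets $U_1,U_2,V_1,V_2$, choose $y_i\in V_i$ and $\varepsilon>0$ with $B(y_i,\varepsilon)\subseteq V_i$, and pick $r_0$ from collective convergence so that $D(f_{r_0+1}^{\,k},f^{k})<\varepsilon/2$ for every $k$. Put $U_i'=f_1^{-r_0}(U_i)$, which is nonempty (each $f_i$ is surjective, hence so is $f_1^{r_0}$) and open (by continuity), and put $V_i'=B(y_i,\varepsilon/2)$. Since $(X,f_{1,\infty})$ is weakly mixing, Theorem~\ref{weaklyinf} tells me the set of $n$ with $f_1^{n}(U_1')\cap V_1'\neq\emptyset$ and $f_1^{n}(U_2')\cap V_2'\neq\emptyset$ is infinite; I choose one such $n$ with $n>r_0$ and write $n=r_0+k$, $k\geq1$. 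Taking $u_i\in U_i'$ with $f_1^{r_0+k}(u_i)\in V_i'$ and setting $z_i=f_1^{r_0}(u_i)\in U_i$, the estimate $d(f^{k}(z_i),y_i)\leq d(f^{k}(z_i),f_{r_0+1}^{\,k}(z_i))+d(f_1^{r_0+k}(u_i),y_i)<\varepsilon$ gives $f^{k}(z_i)\in V_i$, so $f^{k}(U_i)\cap V_i\neq\emptyset$ for both $i$ with the common exponent $k$; here the sole role of Theorem~\ref{weaklyinf} is to let me push $n$ past the offset $r_0$.

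For the converse, fix nonempty open $U_1,U_2,V_1,V_2$, choose $y_i\in V_i$ and $\varepsilon>0$ with $B(y_i,2\varepsilon)\subseteq V_i$, and take $r_0$ with $D(f_{r_0+1}^{\,k},f^{k})<\varepsilon$ for every $k$. Because $f_{1,\infty}$ is feeble open and feeble openness passes to compositions (if $\mathrm{int}(f_1^{j}(U))\neq\emptyset$, then feeble openness applied to a ball inside it shows $\mathrm{int}(f_1^{j+1}(U))\neq\emptyset$), the sets $W_i=\mathrm{int}\big(f_1^{r_0}(U_i)\big)$ are nonempty and open. Applying weak mixing of $(X,f)$ to $W_1,W_2,B(y_1,\varepsilon),B(y_2,\varepsilon)$ yields a single $k\geq1$ and points $w_i\in W_i$ with $f^{k}(w_i)\in B(y_i,\varepsilon)$. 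Since $w_i\in f_1^{r_0}(U_i)$ there is $x_i\in U_i$ with $f_1^{r_0}(x_i)=w_i$, and then $d(f_1^{r_0+k}(x_i),y_i)\leq d(f_{r_0+1}^{\,k}(w_i),f^{k}(w_i))+d(f^{k}(w_i),y_i)<2\varepsilon$, so $f_1^{r_0+k}(x_i)\in V_i$; thus the common time $n=r_0+k$ witnesses weak mixing of $(X,f_{1,\infty})$.

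I expect the main obstacle to be organizational rather than deep: one must secure a single transition time for both pairs while absorbing the fixed prefix $f_1^{r_0}$, and the two hypotheses enter asymmetrically — surjectivity together with Theorem~\ref{weaklyinf} in the forward direction (to obtain nonempty preimages and a simultaneous hitting time beyond $r_0$), and feeble openness in the backward direction (to keep $f_1^{r_0}(U_i)$ from collapsing to a set with empty interior). Finally, unlike Theorems~\ref{min} and \ref{tt}, no separate ``no isolated point'' assumption is needed, because weak mixing of $(X,f_{1,\infty})$ already forces $X$ to have no isolated point (this is exactly the first step in the proof of Theorem~\ref{weaklyinf}), while the backward direction never uses it.
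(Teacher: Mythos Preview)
Your proof is correct and follows essentially the same approach as the paper: both directions use collective convergence to compare $f_1^{r_0+k}$ with $f^{k}\circ f_1^{r_0}$, with surjectivity plus Theorem~\ref{weaklyinf} providing nonempty preimages and a simultaneous hitting time beyond $r_0$ in one direction, and feeble openness guaranteeing $\mathrm{int}(f_1^{r_0}(U_i))\neq\emptyset$ in the other. The only differences from the paper are cosmetic (you swap the order of the two implications, work with general open sets and shrink to balls rather than starting from balls, and make the role of the ``no isolated point'' hypothesis explicit).
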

\begin{proof}
Let $\epsilon>0$ be given and let $U_1=B(x_1,\epsilon)$, $U_2=B(x_2,\epsilon)$, $V_1=B(y_1,\epsilon)$ and $V_2=B(y_2,\epsilon)$ be four non-empty open sets in $X$. As $\{f_n^{n+k}\}_{k\in\mathbb{N}}$ converges collectively to $\{f^k\}_{k\in\mathbb{N}}$, there exists $r_0\in \mathbb{N}$ such that $D(f_r^{r+k},f^k) < \frac{\epsilon}{2}$ for all $r\geq r_0$ and for every $k\in\mathbb{N}$. Since $f_{1,\infty}$ is feeble open, $int(f_1^{r_0}(U_1))$ and $int(f_1^{r_0}(U_2))$ are non-empty open of $X$ and thus by weakly mixing of $(X,f)$, for open sets $U_1'=int(f_1^{r_0}(U_1))$,$U_2'=int(f_1^{r_0}(U_2))$,$V_1'=B(y_1,\frac{\epsilon}{2})$ and $V_2'=B(y_2,\frac{\epsilon}{2})$ there exists $m\in \mathbb{N}$ such that $f^m(U_1')\cap V_1' \neq\emptyset$ and $f^m(U_2')\cap V_2' \neq\emptyset$. Consequently there exist $u_1'\in U_1'$ and $u_2'\in U_2'$ such that $f^m(u_1')\in V_1'$ and $f^m(u_2')\in V_2'$. Further, as $U_1'=int(f_1^{r_0}(U_1))$ and $U_2'=int(f_1^{r_0}(U_2))$, there exist $u_1\in U_1$ and $u_2\in U_2$ such that $u_1'=f_1^{r_0}(u_1)$ and $u_2'=f_1^{r_0}(u_2)$, hence $f^m (f_1^{r_0}(u_1))\in V_1'$ and $f^m (f_1^{r_0}(u_2))\in V_2'$.
Also, collective convergence ensures $d(f_1^{m+r_0}(u_1),f^m(f_1^{r_0}(u_1)))<\frac{\epsilon}{2}$ and $d(f_1^{m+r_0}(u_2),f^m(f_1^{r_0}(u_2)))<\frac{\epsilon}{2}$, and hence by the triangle inequality $d(y_1,f_1^{m+r_0}(u_1))<\epsilon$ and $d(y_2,f_1^{m+r_0}(u_2))<\epsilon$, therefore, $f_1^{m+r_0}(U_1)\cap V_1\neq\emptyset$ and $f_1^{m+r_0}(U_2)\cap V_2\neq\emptyset$. As the proof holds for any pair of non-empty open sets $B(x_1,\epsilon),B(x_2,\epsilon),B(y_1,\epsilon),B(y_2,\epsilon)$ in $X$, the proof holds for any pair of non-empty open sets in $X$. Hence $(X,f_{1,\infty})$ is weakly mixing.

Conversely, let $\epsilon>0$ be given and let $B(x_1,\epsilon)$,$B(x_2,\epsilon)$,$B(y_1,\epsilon)$ and $B(y_2,\epsilon)$ be four non-empty open sets in $X$. As $\{f_n^{n+k}\}_{k\in\mathbb{N}}$ converges collectively to $\{f^k\}_{k\in\mathbb{N}}$, there exists $r_0\in \mathbb{N}$ such that $D(f_r^{r+k},f^k) < \frac{\epsilon}{2}$ for all $r\geq r_0$ and for every $k\in\mathbb{N}$. Put $U_1=f_1^{-r_0}(B(x_1,\epsilon))$, $U_2=f_1^{-r_0}(B(x_2,\epsilon))$, $V_1=S(y_1,\frac{\epsilon}{2})$ and $V_2=S(y_2,\frac{\epsilon}{2})$. Applying theorem \ref{weaklyinf} we can choose $k\in\mathbb{N}$ such that $f_1^{r_0+k}(U_1)\cap V_1\neq\emptyset$ and $f_1^{r_0+k}(U_2)\cap V_2\neq\emptyset$. Consequently, there exist $u_1\in U_1$ and $u_2\in U_2$ such that $d(f_1^{r_0+k}(u_1),y_1)<\frac{\epsilon}{2}$ and $d(f_1^{r_0+k}(u_2),y_2)<\frac{\epsilon}{2}$. Further, collective convergence ensures $d(f_1^{k+r_0}(u_1),f^k(f_1^{r_0}(u_1)))<\frac{\epsilon}{2}$ and $d(f_1^{k+r_0}(u_2),f^k(f_1^{r_0}(u_2)))<\frac{\epsilon}{2}$, and hence by triangle inequality we have $d(y_1,f^k(f_1^{r_0}(u_1)))<\epsilon$ and $d(y_2,f^k(f_1^{r_0}(u_2)))<\epsilon$. As $f_1^{r_0}(u_1)\in B(x_1,\epsilon)$ and $f_1^{r_0}(u_2)\in B(x_2,\epsilon)$, we have $f^k(B(x_1,\epsilon))\cap B(y_1,\epsilon)\neq\emptyset$ and $f^k(B(x_2,\epsilon))\cap B(y_2,\epsilon)\neq\emptyset$. As the proof holds for any choice of non-empty open sets $B(x_1,\epsilon),B(x_2,\epsilon),B(y_1,\epsilon),B(y_2,\epsilon)$ in $X$, the system $(X,f)$ is weakly mixing.
\end{proof}

\begin{theorem}\label{totally cd}
Let $(X,f_{1,\infty})$ be a non-autonomous system with $f_{1,\infty}$ being feeble open and surjective and converging uniformly to a map $f$. If $\{f_n^{n+k}\}_{k\in\mathbb{N}}$ converges collectively to $\{f^k\}_{k\in\mathbb{N}}$, then $(X,f_{1,\infty})$ is totally transitive if and only if $(X,f)$ is totally transitive.
\end{theorem}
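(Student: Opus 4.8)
The plan is to reduce the claim to the transitivity equivalence already established in Theorem \ref{tt}, but applied separately to each iterate $f_{1,\infty}^{[n]}$, and to replace its input Lemma \ref{yinlijmaa2} (which forces ``no isolated point'') by a direct consequence of total transitivity. First I would record two bookkeeping identities for iterates of an arbitrary NDS $h_{1,\infty}$: the composition of the first $m$ terms of $h_{1,\infty}^{[k]}$ equals $h_1^{km}$, and $(h_{1,\infty}^{[k]})^{[q]}=h_{1,\infty}^{[kq]}$. In particular, if $f_{1,\infty}$ is totally transitive then for each fixed $n$ the system $g_{1,\infty}:=f_{1,\infty}^{[n]}$ is itself totally transitive, since its $q$-th iterate is $f_{1,\infty}^{[nq]}$, which is transitive by hypothesis. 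I would then verify that $g_{1,\infty}$ inherits all the standing assumptions with limit $g:=f^n$: each term $g_m=f_{n(m-1)+1}^{n}$ is a composition of $n$ feeble open surjections, hence feeble open and surjective; and since $(g_{1,\infty})_r^{k}=f_{n(r-1)+1}^{nk}$, the collective convergence of $\{f_s^{j}\}$ to $\{f^{j}\}$ (uniform in $j$ as the starting index $\to\infty$) forces $\{(g_{1,\infty})_r^{k}\}$ to converge collectively to $\{(f^n)^{k}\}$ and $g_{1,\infty}$ to converge uniformly to $f^n$.

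Next I would prove the key replacement lemma: \emph{if an NDS $h_{1,\infty}$ is totally transitive, then $N_{h_{1,\infty}}(U,V)$ is infinite for all nonempty open $U,V$.} Indeed, were it finite with maximum $p$, transitivity of $h_{1,\infty}^{[p+1]}$ would give some $m$ with $h_1^{(p+1)m}(U)\cap V\neq\emptyset$, so that $(p+1)m\in N_{h_{1,\infty}}(U,V)$ exceeds $p$, a contradiction. This is precisely the ingredient that lets me dispense with the ``no isolated point'' hypothesis, since I no longer need Lemma \ref{yinlijmaa2} to supply arbitrarily large return times.

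With these in place the two implications run as in Theorem \ref{tt}, applied to $(g_{1,\infty},g)=(f_{1,\infty}^{[n]},f^n)$ for each $n$. For ``$(X,f)$ totally transitive $\Rightarrow (X,f_{1,\infty})$ totally transitive'' I fix $n$, assume $g=f^n$ transitive, and run the necessity argument: given nonempty open $U,V$ and a ball $B(y,\epsilon)\subseteq V$, pick $r_0$ from collective convergence, set $U'=int((g_{1,\infty})_1^{r_0}(U))$ (nonempty open by feeble openness), use transitivity of $g$ to find $m$ and $u'\in U'$ with $g^m(u')\in B(y,\epsilon/2)$, write $u'=(g_{1,\infty})_1^{r_0}(u)$ with $u\in U$, and close up with $d((g_{1,\infty})_{r_0+1}^{m}(u'),g^m(u'))<\epsilon/2$ and the triangle inequality to get $(g_{1,\infty})_1^{r_0+m}(U)\cap V\neq\emptyset$. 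Hence $f_{1,\infty}^{[n]}$ is transitive for every $n$. For the converse I fix $n$, note $g_{1,\infty}=f_{1,\infty}^{[n]}$ is totally transitive, invoke the replacement lemma to make $N_{g_{1,\infty}}(U,V)$ infinite, and re-run the sufficiency argument of Theorem \ref{tt} with $U=(g_{1,\infty})_1^{-r_0}(B(x,\epsilon))$ and $V=B(y,\epsilon/2)$, choosing a return time $r_0+k>r_0$ and splitting $(g_{1,\infty})_1^{r_0+k}=(g_{1,\infty})_{r_0+1}^{k}\circ(g_{1,\infty})_1^{r_0}$, to conclude $g=f^n$ transitive; as $n$ is arbitrary, $(X,f)$ is totally transitive.

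The routine parts are the two bookkeeping identities and the check that the iterate inherits feeble openness, surjectivity and collective convergence. The real content, and the only place the argument departs from Theorem \ref{tt}, is the replacement lemma, whose sole purpose is to extract infinitely many (in particular arbitrarily large) return times from total transitivity alone, thereby removing the ``$X$ has no isolated point'' assumption. I expect that lemma to be the crux; everything else is a faithful re-run of the transitivity proof on each iterate system.
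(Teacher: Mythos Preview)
Your proposal is correct and follows essentially the same approach as the paper: both hinge on the observation that total transitivity by itself forces $N_{f_{1,\infty}^{[s]}}(U,V)$ to be infinite (by passing to a higher iterate to overshoot any putative maximal return time), which is exactly what replaces the ``no isolated point'' hypothesis from Theorem~\ref{tt}. The only difference is packaging---you abstract this step as a standalone lemma and verify that each iterate system $g_{1,\infty}=f_{1,\infty}^{[n]}$ inherits feeble openness, surjectivity and collective convergence so as to rerun Theorem~\ref{tt} verbatim, whereas the paper carries out the identical computation inline for $f_1^{s\cdot}$ without naming the iterate system---but the underlying argument is the same.
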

\begin{proof}
Let $s\in\mathbb{N}$ and $\epsilon>0$ be given and let $U=B(x,\epsilon)$, $V=B(y,\epsilon)$ be two non-empty open sets in $X$. As $\{f_n^{n+k}\}_{k\in\mathbb{N}}$ converges collectively to $\{f^k\}_{k\in\mathbb{N}}$, there exists $r_0\in \mathbb{N}$ such that $D(f_r^{r+k},f^k) < \frac{\epsilon}{2}$ for all $r\geq r_0$ and for every $k\in\mathbb{N}$. Since $f_{1,\infty}$ is feeble open, $int(f_1^{sr_0}(U))$ is non-empty open of $X$ and thus by total transitivity of $(X,f)$, for open sets $U'=int(f_1^{sr_0}(U_1))$ and $V'=B(y,\frac{\epsilon}{2})$ there exists $m\in \mathbb{N}$ such that $f^{sm}(U')\cap V' \neq\emptyset$. Consequently there exist $u'\in U'$ such that $f^{sm}(u')\in V'$. Further, as $U'=int(f_1^{sr_0}(U))$, there exist $u\in U$ such that $u'=f_1^{sr_0}(u_1)$, hence $f^{sm} (f_1^{r_0}(u))\in V'$.
Also, collective convergence ensures $d(f_1^{sm+sr_0}(u),f^{sm}(f_1^{r_0}(u)))<\frac{\epsilon}{2}$, and hence by the triangle inequality $d(y,f_1^{sm+sr_0}(u))<\epsilon$, therefore, $f_1^{sm+sr_0}(U_1)\cap V_1\neq\emptyset$. As the proof holds for any pair of non-empty open sets $B(x,\epsilon),B(y,\epsilon)$ in $X$, the proof holds for any pair of non-empty open sets in $X$. Hence $(X,f_{1,\infty})$ is totally transitive.

Conversely, let $s\in\mathbb{N}$ and $\epsilon>0$ be given and let $B(x,\epsilon)$ and $B(y,\epsilon)$ be two non-empty open sets in $X$. As $\{f_n^{n+k}\}_{k\in\mathbb{N}}$ converges collectively to $\{f^k\}_{k\in\mathbb{N}}$, there exists $r_0\in\mathbb{N}$ such that $D(f_r^k,f^k) < \frac{\varepsilon}{2}$ for all $r\geq r_0$ and for every $k\in\mathbb{N}$. Take $U=f_1^{-sr_0}(B(x,\epsilon))$ and $V=S(y,\frac{\epsilon}{2})$. By total transitivity of $(X,f_{1,\infty})$, for any non-empty open sets $U,V$ we will claim that the set $N_{f_{1,\infty}^{[s]}}(U,V)$ is infinite. If not, let $m=max\{N_{f_{1,\infty}^{[s]}}(U,V)\}$. Then take $s_0=(m+1)s$. Since $f_{1,\infty}$ is totally transitive, for $s_0$ and non-empty open sets $U,V$, there exists $l\in\mathbb{N}$ such that $f_1^{ls_0}(U)\cap V=f_1^{l(m+1)s}(U)\cap V\neq\emptyset$, which implies $l(m+1)\in N_{f_{1,\infty}^{[s]}}(U,V)$. Note that $l(m+1)>m$, so it is a contradiction to the definition of $m$. Now, since $N_{f_{1,\infty}^{[s]}}(U,V)$ is infinite, there exists $k$ such that $f_1^{r_0s+ks}(U)\cap V\neq\emptyset$. Consequently, there exists $u\in U$ such that $d(f_1^{r_0s+ks}(u),y)<\frac{\epsilon}{2}$. Hence by triangle inequality, $d(y,f^{ks}(f_1^{sr_0}(u)))<\epsilon$. As $f_1^{sr_0}(u)\in B(x,\epsilon)$ we have $f^{ks}(B(x,\epsilon))\cap B(y,\epsilon)\neq\emptyset$. As the proof holds for any choice of non-empty open sets $B(x,\epsilon)$ and $B(y,\epsilon)$, the system $(X,f)$ is totally transitive.

\end{proof}

A similar argument establishes equivalence of syndetical transitivity for the two system, so we get the following corollary.

\begin{corollary}\label{tl2}
Let $(X,f_{1,\infty})$ be a non-autonomous system with $f_{1,\infty}$ being feeble open and surjective and converging uniformly to a map $f$. If $\{f_n^{n+k}\}_{k\in\mathbb{N}}$ converges collectively to $\{f^k\}_{k\in\mathbb{N}}$, then $(X,f_{1,\infty})$ is syndetically transitive if and only if $(X,f)$ is syndetically transitive.
\end{corollary}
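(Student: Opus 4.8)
The plan is to run the same backward-and-forward transfer used in Theorems \ref{tt} and \ref{totally cd}, but to keep track of the \emph{gap structure} of the return-time sets rather than merely their infiniteness. The one new ingredient is the elementary observation that syndeticity is stable under translation by a fixed constant and under deletion of a finite initial block: if $A\subseteq\mathbb{N}$ has bounded gaps with window bound $M$, then so do $A+r_0$ and $(A\cap[r_0,\infty))-r_0$. For the latter one uses that the window $\{r_0,r_0+1,\dots,r_0+M\}$ must meet $A$, so $\min(A\cap[r_0,\infty))\leq r_0+M$ and hence the first element of $(A\cap[r_0,\infty))-r_0$ is at most $M$, while the consecutive gaps are unchanged. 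Since every superset of a syndetic set is syndetic, it will suffice to exhibit, for each pair of balls, a syndetic set sitting inside the relevant return-time set.

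For the direction $(X,f)\Rightarrow(X,f_{1,\infty})$ I would fix balls $U=B(x,\epsilon)$, $V=B(y,\epsilon)$ and choose $r_0$ from collective convergence so that $d(f_r^k(z),f^k(z))<\tfrac{\epsilon}{2}$ for all $r\geq r_0$, all $k$, and all $z$. Exactly as in the direct part of Theorems \ref{weakly} and \ref{totally cd}, feeble openness makes $U'=\mathrm{int}(f_1^{r_0}(U))$ a nonempty open set; put $V'=B(y,\tfrac{\epsilon}{2})$. Syndetic transitivity of $(X,f)$ gives that $N_f(U',V')$ is syndetic. The pull-back computation of those proofs shows that for each $m\in N_f(U',V')$ one produces $u\in U$ with $f_1^{m+r_0}(u)\in B(y,\epsilon)=V$, hence $m+r_0\in N_{f_{1,\infty}}(U,V)$. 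Therefore $N_f(U',V')+r_0\subseteq N_{f_{1,\infty}}(U,V)$, and the translation remark above finishes this direction.

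For the converse I would reverse the roles exactly as in the proof of Theorem \ref{tt}: set $U=f_1^{-r_0}(B(x,\epsilon))$, which is nonempty and open since each $f_i$ is surjective and continuous, and $V=B(y,\tfrac{\epsilon}{2})$, and use syndetic transitivity of $(X,f_{1,\infty})$ to get that $N_{f_{1,\infty}}(U,V)$ is syndetic with some window bound $M$. For every $n\in N_{f_{1,\infty}}(U,V)$ with $n>r_0$, writing $k=n-r_0$ and applying collective convergence together with the triangle inequality, just as in Theorem \ref{tt}, yields $f^k(B(x,\epsilon))\cap B(y,\epsilon)\neq\emptyset$, i.e. $n-r_0\in N_f(B(x,\epsilon),B(y,\epsilon))$. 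Thus $(N_{f_{1,\infty}}(U,V)\cap(r_0,\infty))-r_0\subseteq N_f(B(x,\epsilon),B(y,\epsilon))$, and the left-hand side is syndetic by the stability remark, so the right-hand side is syndetic as well.

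The main point to get right, and the only place where this argument genuinely differs from Theorems \ref{tt} and \ref{totally cd}, is the bookkeeping for the gap bounds under translation and truncation, since syndeticity, unlike mere infiniteness, is a quantitative condition on all of $\mathbb{N}$; in particular one must verify that passing to the tail $N_{f_{1,\infty}}(U,V)\cap(r_0,\infty)$ does not destroy the bounded-gap property near the origin, which is handled by the window argument noted above. No use of the ``no isolated point'' hypothesis is required here, because both directions invoke the syndetic-transitivity assumption of one system directly rather than Lemma \ref{yinlijmaa2}; this is why, as in Theorems \ref{weakly} and \ref{totally cd}, that assumption can be dropped.
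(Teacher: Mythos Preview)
Your proposal is correct and follows exactly the template the paper intends: it says only that ``a similar argument'' to Theorems \ref{weakly} and \ref{totally cd} gives the corollary, and your write-up is precisely that argument with the extra (and entirely elementary) bookkeeping showing that the containments $N_f(U',V')+r_0\subseteq N_{f_{1,\infty}}(U,V)$ and $(N_{f_{1,\infty}}(U,V)\cap(r_0,\infty))-r_0\subseteq N_f(B(x,\epsilon),B(y,\epsilon))$ preserve syndeticity. Your observation that the ``no isolated point'' hypothesis is unnecessary here, since syndetic sets are automatically infinite, is also on target and matches the paper's silent omission of that hypothesis in the statement.
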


\begin{theorem}\label{mildly}
Let $(X,f_{1,\infty})$ be a non-autonomous system with $f_{1,\infty}$ being feeble open and  surjective and converging uniformly to a map $f$. If $\{f_n^{n+k}\}_{k\in\mathbb{N}}$ converges collectively to $\{f^k\}_{k\in\mathbb{N}}$, then $(X,f_{1,\infty})$ is mildly mixing $\Rightarrow$ $(X,f)$ is mildly mixing.  Converse is not true in general.
\end{theorem}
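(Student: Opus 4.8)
The plan is to deduce the forward implication from Theorem~\ref{mild}, which for non-autonomous systems collapses mildly mixing to mixing, together with a transfer of the (easier) mixing property between $(X,f_{1,\infty})$ and its limit $(X,f)$ carried out exactly as in the proofs of Theorems~\ref{tt} and~\ref{weakly}. Concretely, I would run the chain: $(X,f_{1,\infty})$ mildly mixing $\Rightarrow$ $(X,f_{1,\infty})$ mixing $\Rightarrow$ $(X,f)$ mixing $\Rightarrow$ $(X,f)$ mildly mixing. The first arrow is immediate from Theorem~\ref{mild}. For the last arrow I first observe that $X$ has no isolated point: assuming $X$ has at least two points (the one-point case being trivial), if some $a$ were isolated then applying mixing of $f_{1,\infty}$ to the open pairs $(\{a\},\{a\})$ and $(\{a\},X\setminus\{a\})$ would force both $f_1^n(a)=a$ and $f_1^n(a)\neq a$ for all large $n$, a contradiction; hence Theorem~\ref{mild} applies to the constant sequence $f$ and upgrades mixing of $(X,f)$ to mildly mixing.

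The substantive new step is the middle arrow, $(X,f_{1,\infty})$ mixing $\Rightarrow$ $(X,f)$ mixing, which I would prove by mimicking the sufficiency argument of Theorem~\ref{tt}. Given balls $U=B(x,\epsilon)$ and $V=B(y,\epsilon)$, collective convergence yields $r_0$ with $D(f_r^k,f^k)<\epsilon/2$ for all $r\geq r_0$ and all $k$. Put $U'=f_1^{-r_0}(B(x,\epsilon))$, which is nonempty and open by surjectivity and continuity of $f_1^{r_0}$, and $V'=B(y,\epsilon/2)$. Mixing of $f_{1,\infty}$ provides $N_0$ with $f_1^{r_0+k}(U')\cap V'\neq\emptyset$ for all sufficiently large $k$; choosing $u'\in U'$ with $d(f_1^{r_0+k}(u'),y)<\epsilon/2$ and using the factorisation $f_1^{r_0+k}=f_{r_0+1}^{k}\circ f_1^{r_0}$ together with $d(f_{r_0+1}^{k}(z),f^k(z))<\epsilon/2$ (collective convergence at $z=f_1^{r_0}(u')$), the triangle inequality gives $d(y,f^k(f_1^{r_0}(u')))<\epsilon$ with $f_1^{r_0}(u')\in B(x,\epsilon)$. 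Thus $f^k(U)\cap V\neq\emptyset$ for all sufficiently large $k$, and since balls form a basis $(X,f)$ is mixing. This is routine once the bookkeeping of the composition identity is in place.

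The delicate part, and the main obstacle, is the converse, because under the full list of standing hypotheses the two notions are in fact forced to coincide: the same transfer argument run in the opposite direction (replace $U'$ by $int(f_1^{r_0}(U))$, which is nonempty by feeble openness, and apply mixing of $(X,f)$) shows that mixing of $(X,f)$ returns mixing of $(X,f_{1,\infty})$, whence Theorem~\ref{mild} again yields mildly mixing. Consequently a genuine counterexample must relax one of the assumptions — most naturally the collective convergence, which is exactly what powers the transfer — or exploit the degeneracy already visible in the final paragraph of Theorem~\ref{mild}, where pairing a mixing system with the transitive system of Example~\ref{li4} (whose space has an isolated point, so that Lemma~\ref{yinlijmaa2} fails and the hitting set need not be infinite) destroys product transitivity. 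I would therefore build the counterexample on a space carrying an isolated point, taking a limit map that is mildly mixing on its nondegenerate factor while arranging, via such a degenerate auxiliary transitive system, that the product defining mildly mixing for $f_{1,\infty}$ fails to be transitive; checking that this construction meets the intended hypotheses (or transparently violates only the one being isolated) is the step that will require the most care.
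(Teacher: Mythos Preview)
Your forward implication is correct and coincides with the shortcut the paper itself records in the Remark immediately following the theorem: mildly mixing of $f_{1,\infty}$ gives mixing of $f_{1,\infty}$ by Theorem~\ref{mild}, mixing transfers to $(X,f)$ (this is \cite[Theorem~2.5]{q36}, and your direct argument reproduces it), and autonomous mixing implies autonomous mildly mixing. The paper's main proof is slightly more hands-on, working directly with the product $f\times g$ for an arbitrary autonomous transitive $g$, but the two routes are equivalent. One small correction: your justification of the last arrow via the converse in Theorem~\ref{mild} together with ``$X$ has no isolated point'' is misaimed --- in that converse the no-isolated-point condition concerns the auxiliary system $(Y,g_{1,\infty})$, not $X$ --- but the conclusion stands because, as the paper states just before Theorem~\ref{mild}, mixing is strictly stronger than mildly mixing for autonomous systems.

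Your treatment of the converse contains a genuine gap. You claim that ``under the full list of standing hypotheses the two notions are in fact forced to coincide,'' arguing that mixing of $(X,f)$ transfers back to mixing of $(X,f_{1,\infty})$ and thence to mildly mixing. But the hypothesis for the converse is that $(X,f)$ is \emph{mildly mixing}, not mixing, and for autonomous systems mildly mixing is strictly weaker than mixing; your chain silently inserts the false step ``autonomous mildly mixing $\Rightarrow$ autonomous mixing.'' This gap is exactly what the paper exploits for its counterexample: take any autonomous $(X,f)$ that is mildly mixing but not mixing and set $f_{1,\infty}=\{f,f,f,\dots\}$. All the standing hypotheses (feeble open, surjective, uniform convergence to $f$, collective convergence) hold trivially, and $(X,f)$ is mildly mixing by choice. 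Yet $(X,f_{1,\infty})$ cannot be mildly mixing in the non-autonomous sense, since by Theorem~\ref{mild} that would force $(X,f_{1,\infty})$ --- hence $f$ --- to be mixing. No relaxation of hypotheses and no isolated-point construction is needed; the asymmetry between the autonomous and non-autonomous notions of ``mildly mixing'' (the latter being equivalent to mixing by Theorem~\ref{mild}) does all the work.
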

\begin{proof}
For any autonomous system $(Y,d_1)$ and for any transitive map $g:Y\rightarrow Y$, let $g_{1,\infty}=\{g,g,...\}$, let $\epsilon>0$ be given, and let $U=B(x,\epsilon)$ and $V=B(y,\epsilon)$ be two non-empty open sets in $X$, and let $U_1$ and $V_1$ be two non-empty open sets in $Y$, then $U\times U_1$ and $V\times V_1$ are two non-empty open sets in $X\times Y$. As $\{f_n^{n+k}\}_{k\in\mathbb{N}}$ converges collectively to $\{f^k\}_{k\in\mathbb{N}}$, there exists $r_0\in \mathbb{N}$ such that $D(f_r^{r+k},f^k) < \frac{\epsilon}{2}$ for all $r\geq r_0$ and for every $k\in\mathbb{N}$. Further, since $f_{1,\infty}$ is mildly mixing, $f_{1,\infty}$ is mixing by Theorem \ref{mild}. Besides, since $N_{g_{1,\infty}}(U_1,V_1)$ is infinite, the set of times when any non-empty open set $U\times U_1$ visits $V\times V_1$ is infinite. Applying transitivity to open sets $U\times U_1'=f_1^{-r_0}(B(x,\epsilon))\times f_1^{-r_0}(U_1)$ and $V\times V_1=B(y,\frac{\epsilon}{2})\times V_1$, we can choose $k$ such that $f_1^{r_0+k}(U)\cap V\neq\emptyset$ and $g_1^{r_0+k}(U_1')\cap V_1\neq\emptyset$. Consequently, there exist $u\in U$ and $u_1\in U_1'$ such that $d(f_1^{r_0+k}(u),y)<\frac{\epsilon}{2}$ and $g_1^{r_0+k}(u_1)\in V_1$. Further, collective convergence ensures $d(f_1^{k+r_0}(u),f^k(f_1^{r_0}(u)))<\frac{\epsilon}{2}$ and hence by triangle inequality $d(y,f^k(f_1^{r_0}(u)))<\epsilon$. As $f_1^{r_0}(u)\in B(x,\epsilon)$ and  $f_1^{r_0}(u_1)\in U_1$ we have $f^k(B(x,\epsilon))\cap B(y,\epsilon)\neq\emptyset$ and $g^k(U_1)\cap V_1\neq\emptyset$. As the proof holds for any choice of non-empty open sets $B(x,\epsilon),B(y,\epsilon)$ of $X$ and $U_1,V_1$ of $Y$, the system $(X,f)$ is mildly mixing.

Conversely, let $f$ be mildly mixing but not mixing in autonomous system. Consider the non-autonomous discrete system $(X,f_{1,\infty})$, where $f_{1,\infty}$ is defined by
$$f_{1,\infty}=\{f,f,f,...\}.$$ Then it is easy to see that $f_{1,\infty}$ is feeble open and surjective, and it converges uniformly to the map $f$. Moreover, $\{f_n^{n+k}\}_{k\in\mathbb{N}}$ converges collectively to $\{f^k\}_{k\in\mathbb{N}}$. We claim that $(X,f_{1,\infty})$ is not mildly mixing, if not, then by Theorem \ref{mild}, $(X,f_{1,\infty})$ is mixing. Hence by \cite[Theorem 2.5]{q36} $f$ is mixing, which is a contradiction.

\end{proof}
\begin{remark}
The above result that mildly mixing of $(X,f_{1,\infty})$ implies mildly mixing of $(X,f)$ can be quickly obtained by Theorem \ref{mild} and \cite[Theorem 2.5]{q36}.
\end{remark}

For autonomous discrete dynamical system it is known that syndetical transitivity and weakly mixing imply multi-transitivity\cite{q33}. However, it is not always true for non-autonomous discrete dynamical system as justified by the example below.

\begin{example}\label{li2}

Let $\Sigma_2$ and $\sigma$ be defined in Example \ref{li1}. Consider the non-autonomous discrete system $(\Sigma_2,f_{1,\infty})$, where $f_{1,\infty}$ is defined by
$$f_{1,\infty}=\{\sigma,\sigma^{-1},\sigma^2,\sigma^{-2},...,\sigma^n,\sigma^{-n},...\}.$$

Now we show that $f_{1,\infty}$ is syndetically transitive. Let $U,V$ be any nonempty open subsets of $X$. Since $\sigma$ is mixing, there exists $M\in\mathbb{N}$ such that $\sigma^p(U)\cap V\neq\emptyset$ for any $p>M$. Notice that $f_1^{2m-1}=\sigma^m$ for any $m\in\mathbb{N}$, so $f_1^{2m-1}(U)\cap V=\sigma^m(U)\cap V\neq\emptyset$ for any $m>M$. Therefore, $f_{1,\infty}$ is syndetically transitive.

Next, we prove that $f_{1,\infty}$ is weakly mixing. Since $\sigma$ is weakly mixing and $f_1^{2m-1}=\sigma^m$ for any $m\in\mathbb{N}$, it is easy to see that $f_{1,\infty}$ is weakly mixing.

Finally, we verify that $f_{1,\infty}$ is not multi-transitive. Take $m=2$, there exist nonempty open subsets $U_1,U_2,V_1,V_2$ of $X$ such that $U_1\cap V_1=\emptyset$ and $U_2\cap V_2=\emptyset$. Then for any $t\in\mathbb{N}$, $f_1^{2t}(U_2)\cap V_2=U_2\cap V_2=\emptyset$. Therefore, $f_{1,\infty}$ is not multi-transitive.

\end{example}
Now, we give a sufficient condition under which syndetical transitivity and weakly mixing imply multi-transitivity in non-autonomous discrete dynamical system.
\begin{theorem}
Let $(X,f_{1,\infty})$ be a non-autonomous system with $f_{1,\infty}$ being feeble open and surjective and converging uniformly to a map $f$. If $\{f_n^{n+k}\}_{k\in\mathbb{N}}$ converges collectively to $\{f^k\}_{k\in\mathbb{N}}$, then syndetical transitivity and weakly mixing of $(X,f_{1,\infty})$ imply multi-transitivity.
\end{theorem}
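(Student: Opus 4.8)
The plan is to recognize the order-$m$ instance of multi-transitivity of $(X,f_{1,\infty})$ as the transitivity of a single product non-autonomous system on $X^{m}$ whose limiting system is the autonomous multi-transitivity witness, and then to feed that product into the transfer theorem for transitivity already proved above. First I would descend to the limit map. By Theorem \ref{weakly} the weak mixing of $(X,f_{1,\infty})$ passes to $(X,f)$, and by Corollary \ref{tl2} the syndetic transitivity of $(X,f_{1,\infty})$ passes to $(X,f)$. Since for autonomous systems syndetic transitivity together with weak mixing forces multi-transitivity \cite{q33}, the map $f$ is multi-transitive; equivalently, for each fixed $m\in\mathbb{N}$ the product $F:=f\times f^{2}\times\cdots\times f^{m}$ is transitive on $X^{m}$.

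Fix $m$ and form the product non-autonomous system $F_{1,\infty}:=f_{1,\infty}\times f_{1,\infty}^{[2]}\times\cdots\times f_{1,\infty}^{[m]}$ on $X^{m}$; by the very definition of multi-transitivity, transitivity of $F_{1,\infty}$ is exactly the order-$m$ statement we want. I would apply Theorem \ref{tt} to the pair $(X^{m},F_{1,\infty})$ and $(X^{m},F)$, so I must check that $F_{1,\infty}$ meets its hypotheses. The $n$-th map of the factor $f_{1,\infty}^{[j]}$ is $f_{j(n-1)+1}^{j}$, a composition of $j$ feeble-open surjective maps, hence itself feeble open and surjective; and since $j(n-1)+1\to\infty$, collective convergence of $f_{1,\infty}$ gives $f_{j(n-1)+1}^{j}\to f^{j}$ uniformly, so $f_{1,\infty}^{[j]}$ converges uniformly to $f^{j}$. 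As feeble openness, surjectivity and uniform convergence are preserved under finite products, $F_{1,\infty}$ is feeble open, surjective and converges uniformly to $F$. Moreover weak mixing of $(X,f_{1,\infty})$ forces $X$ to have no isolated point (the opening paragraph of the proof of Theorem \ref{weaklyinf}), hence neither does $X^{m}$, so the no-isolated-point hypothesis of Theorem \ref{tt} is available; transitivity of $F$ then yields transitivity of $F_{1,\infty}$, and letting $m$ range over $\mathbb{N}$ completes the argument.

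The step I expect to be the crux is the collective convergence of $F_{1,\infty}$ to $\{F^{k}\}$, because this is where the block structure of the iterated factors must be reconciled with the uniform-in-$k$ estimate. Unwinding the $k$-fold tail of the $j$-th factor from index $r$ gives a single block of $jk$ original maps,
\[
(f_{1,\infty}^{[j]})_{r}^{k}=f_{j(r-1)+1}^{jk},
\]
with limiting counterpart $(f^{j})^{k}=f^{jk}$. Given $\varepsilon>0$, collective convergence of $f_{1,\infty}$ provides $R$ with $D(f_{r'}^{k'},f^{k'})<\varepsilon$ for all $r'\ge R$ and all $k'$; the decisive observation is that $j(r-1)+1\ge r$ for every $j\ge1$, so taking $r\ge R$ makes the base index of every factor at least $R$ simultaneously, uniformly in both $j\le m$ and $k$, whence $D\big(f_{j(r-1)+1}^{jk},f^{jk}\big)<\varepsilon$ on each coordinate. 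With the maximum metric on $X^{m}$ this gives $D(F_{r}^{k},F^{k})<\varepsilon$ for all $r\ge R$ and all $k$, i.e. collective convergence of $F_{1,\infty}$ to $\{F^{k}\}$. Once this uniform bound is secured the remaining hypotheses of Theorem \ref{tt} are routine and the transfer delivers the result.
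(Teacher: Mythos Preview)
Your argument is correct. The first two moves --- descending syndetic transitivity and weak mixing to the limit $f$ via Theorem~\ref{weakly} and Corollary~\ref{tl2}, then invoking Moothathu \cite{q33} to obtain multi-transitivity of $f$ --- coincide exactly with the paper. Where you diverge is in the return trip: the paper simply cites \cite[Theorem~4.1]{wx1} to lift multi-transitivity of $f$ back to $(X,f_{1,\infty})$ in one stroke, whereas you rebuild this transfer from scratch by packaging the order-$m$ statement as a single product non-autonomous system $F_{1,\infty}$ on $X^{m}$, verifying that it inherits feeble openness, surjectivity, uniform convergence to $F=f\times f^{2}\times\cdots\times f^{m}$, and --- the only nontrivial check --- collective convergence of $\{F_{r}^{k}\}$ to $\{F^{k}\}$ via the block identity $(f_{1,\infty}^{[j]})_{r}^{k}=f_{j(r-1)+1}^{jk}$ together with $j(r-1)+1\ge r$, and then appealing to (the $f\Rightarrow f_{1,\infty}$ direction of) Theorem~\ref{tt}. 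Your route is longer but fully self-contained within the results of this paper; it has the incidental advantage of sidestepping reliance on \cite[Theorem~4.1]{wx1}, whose original proof (as the paper itself remarks) rests on the uncorrected \cite[Theorem~2.3]{q36}. The paper's route is a one-line citation; yours essentially reconstructs the relevant direction of that external theorem by hand.
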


\begin{proof}
Since $f_{1,\infty}$ is syndetically transitive and weakly mixing, so by Theorem \ref{weakly} and Corollary \ref{tl2}, $f$ is
syndetically transitive and weakly mixing. Thus $f$ is  multi-transitivity by \cite{q33}. Then, by \cite[Theorem4.1]{wx1}, $(X,f_{1,\infty})$ is multi-transitivity.
\end{proof}

For autonomous discrete dynamical system it is known that if $f,g$ are syndetically transitive and weakly mixing, then $f\times g$ is also syndetically transitive and weakly mixing\cite{q33}. However, the result is not always true for non-autonomous discrete dynamical system as justified by following example.

\begin{example}

Let $\Sigma_2$ and $\sigma$ be defined in Example \ref{li1}. Consider the non-autonomous discrete systems $(\Sigma_2,f_{1,\infty})$ and $(\Sigma_2,g_{1,\infty})$, where $f_{1,\infty}$ is defined by
$$f_{1,\infty}=\{\sigma,\sigma^{-1},\sigma^2,\sigma^{-2},...,\sigma^n,\sigma^{-n},...\}$$
and $g_{1,\infty}$ is defined by
$$g_{1,\infty}=\{id,\sigma,\sigma^{-1},\sigma^2,\sigma^{-2},...,\sigma^n,\sigma^{-n},...\}.$$

 With a similar argument in Example \ref{li2}, $f_{1,\infty}$  and $g_{1,\infty}$  syndetically transitive and weakly mixing.

Next, we show that $f_{1,\infty}\times g_{1,\infty}$ is neither syndetically transitive nor weakly mixing. Observe that for any $m\in\mathbb{N}$, $f_1^{2m-1}=\sigma^m$, $f_1^{2m}=id$,  $g_1^{2m-1}=id$ and  $g_1^{2m}=\sigma^m$. Then it is easy to check that $f_{1,\infty}^n\times g_{1,\infty}^n(U,V)=\emptyset$ for any nonempty open subsets $U,V$ of $X$ and any $n\in\mathbb{N}$. The proof is complete.

\end{example}
Now, we provide a sufficient condition under which that $f_{1,\infty},g_{1,\infty}$ are syndetically transitive and weakly mixing imply that $f_{1,\infty}\times g_{1,\infty}$ is syndetically transitive and weakly mixing.
\begin{theorem}
Let $(X,f_{1,\infty}),(Y,g_{1,\infty})$ be non-autonomous systems with $f_{1,\infty},g_{1,\infty}$ being feeble open, surjective, $f_{1,\infty}$ converging uniformly to map $f$ and $g_{1,\infty}$ converging uniformly to map $g$. Suppose that $\{f_n^{n+k}\}_{k\in\mathbb{N}}$ converges collectively to $\{f^k\}_{k\in\mathbb{N}}$ and  that $\{g_n^{n+k}\}_{k\in\mathbb{N}}$ converges collectively to $\{g^k\}_{k\in\mathbb{N}}$. If $f_{1,\infty},g_{1,\infty}$ are both syndetically transitive and weakly mixing, then $f_{1,\infty}\times g_{1,\infty}$ is syndetically transitive and weakly mixing.
\end{theorem}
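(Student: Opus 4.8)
The plan is to route the argument through the limiting autonomous system $(X\times Y,\,f\times g)$, using the equivalences between a non-autonomous system and its limiting map already established in Theorem \ref{weakly} and Corollary \ref{tl2}. First I would record that the product non-autonomous system $(X\times Y,\,f_{1,\infty}\times g_{1,\infty})$, whose maps are $h_n=f_n\times g_n$, itself satisfies the standing hypotheses of those two results. Endowing $X\times Y$ with the max metric, surjectivity of each $h_n$ is immediate, and feeble openness holds because every nonempty open $W\subseteq X\times Y$ contains a box $U\times U_1$ with $(f_n\times g_n)(U\times U_1)=f_n(U)\times g_n(U_1)$, whose interior contains $int(f_n(U))\times int(g_n(U_1))\neq\emptyset$. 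Uniform convergence $h_n\to f\times g$ is clear; and since composition commutes with the product construction, $(f_{1,\infty}\times g_{1,\infty})_r^k=f_r^k\times g_r^k$ and $(f\times g)^k=f^k\times g^k$, so the estimate $D(f_r^k\times g_r^k,\,f^k\times g^k)\le\max\{D(f_r^k,f^k),\,D(g_r^k,g^k)\}$ transfers the collective convergence of the factors to $\{h_r^k\}_{k}$. In particular the limiting system of $f_{1,\infty}\times g_{1,\infty}$ is exactly $f\times g$.

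With the hypotheses verified I would proceed in three steps. \emph{Descend:} the direction ``$f_{1,\infty}$ weakly mixing $\Rightarrow f$ weakly mixing'' of Theorem \ref{weakly} applied to each factor shows $f$ and $g$ are weakly mixing, and the analogous direction of Corollary \ref{tl2} shows $f$ and $g$ are syndetically transitive. \emph{Combine:} by the autonomous result recalled above that syndetic transitivity together with weak mixing passes to products \cite{q33}, the map $f\times g$ is both syndetically transitive and weakly mixing. \emph{Ascend:} the reverse direction ``$f\times g$ weakly mixing $\Rightarrow f_{1,\infty}\times g_{1,\infty}$ weakly mixing'' of Theorem \ref{weakly}, together with the reverse direction of Corollary \ref{tl2}, applied to the product system, yields that $f_{1,\infty}\times g_{1,\infty}$ is weakly mixing and syndetically transitive, which is the claim.

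The genuine dynamical content is borrowed wholesale from Theorem \ref{weakly}, Corollary \ref{tl2} and the autonomous theorem \cite{q33}; the only work is the hypothesis transfer of the first paragraph. The step I expect to demand the most care is precisely that transfer, since neither Theorem \ref{weakly} nor Corollary \ref{tl2} may be invoked for $f_{1,\infty}\times g_{1,\infty}$ until feeble openness, surjectivity, uniform convergence and collective convergence are checked for the product, and until one confirms that its limiting map is $f\times g$ and not some other map. These verifications rest only on the facts that the interior of a product is the product of the interiors and that the product construction commutes with composition, so no new estimate beyond the cited results is required.
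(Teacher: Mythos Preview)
Your proposal is correct and follows essentially the same three-step route as the paper's proof: descend to the limiting maps via Theorem \ref{weakly} and Corollary \ref{tl2}, invoke \cite{q33} for the autonomous product $f\times g$, and then ascend back to $f_{1,\infty}\times g_{1,\infty}$ using the same two results. The paper's argument is terser and leaves implicit the verification that the product system inherits feeble openness, surjectivity, uniform and collective convergence; your explicit check of these hypotheses is a welcome addition, since without it the ascending application of Theorem \ref{weakly} and Corollary \ref{tl2} would not be justified.
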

\begin{proof}
Since $f_{1,\infty}$ and $g_{1,\infty}$ are both syndetically transitive and weakly mixing, so by Theorem \ref{weakly} and Corollary \ref{tl2}, $f$ and $g$ are both
syndetically transitive and weakly mixing. Thus $f\times g$  is  syndetically transitive and weakly mixing by \cite{q33}. Again, using Theorem \ref{weakly} and Corollary \ref{tl2}, $f_{1,\infty}\times g_{1,\infty}$ is syndetically transitive and weakly mixing.
\end{proof}

For autonomous discrete dynamical system it is known that transitivity and (almost)periodic points dense imply syndetical transitivity\cite{q34}. However, the result is not always true for non-autonomous discrete dynamical system as justified by the example below.

\begin{example}

Let $f:S^1\rightarrow S^1$ be the map defined by $f(e^{2\pi i\theta})=e^{2\pi i(\theta+\alpha)}$, where $S^1$ is a unit circle on the complex plane, $\theta\in [0,1]$ and $\alpha$ is a irrational number. Consider the non-autonomous discrete systems $(S,f_{1,\infty})$, where $f_{1,\infty}$ is  given by
\[f_i=\begin{cases}
f^{k}, &  \text{if}\quad i=3^k, \\
f^{-k}, &  \text{if}\quad i=3^k+1,  \\
id, &  else,
\end{cases}\]
where $ k\in \mathbb{N}$, that is

$$f_{1,\infty}=\{f_1,f_2,...,f_n,...\}=f_{1,\infty}=\{id,id,f,f^{-1},id,id,id,id,f^2,f^{-2},id...\}.$$

It is easy to verify that $f_{1,\infty}$ is  transitive. Since $f_1^{2n}=id$ for any $n\in\mathbb{N}$, each $x\in S$ is 2-period point. Hence, $S$  has dense periodic points. But it is easy to see that $f_{1,\infty}$ is not syndetically transitive.

\end{example}
Now, we establish a sufficient condition under which transitivity and periodic points dense imply syndetical transitivity for non-autonomous discrete dynamical system.
\begin{theorem}
Let $(X,f_{1,\infty})$ be a non-autonomous system with $f_{1,\infty}$ being feeble open and surjective and converging uniformly to a map $f$. If $\{f_n^{n+k}\}_{k\in\mathbb{N}}$ converges collectively to $\{f^k\}_{k\in\mathbb{N}}$, then $f_{1,\infty}$ is transitive and has dense periodic points $\Rightarrow$ $f_{1,\infty}$ is syndetically transitive.
\end{theorem}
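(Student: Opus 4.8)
The plan is to transport the whole question to the limiting autonomous system $(X,f)$ and back, in the spirit of Theorems \ref{weakly} and \ref{totally cd}. By Corollary \ref{tl2}, $(X,f_{1,\infty})$ is syndetically transitive if and only if $(X,f)$ is, so it suffices to prove that the limit map $f$ is syndetically transitive. Since the autonomous implication ``transitivity together with dense periodic points $\Rightarrow$ syndetic transitivity'' is already available (\cite{q34}), the argument reduces to two transfer claims for the limit map: (i) $f$ is transitive, and (ii) $f$ has a dense set of periodic points.

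For (ii) I would in fact prove the stronger fact $\mathrm{Per}(f_{1,\infty})\subseteq\mathrm{Per}(f)$, so that the given dense set of $f_{1,\infty}$-periodic points already witnesses density of $\mathrm{Per}(f)$. Let $p$ be $k$-periodic for $f_{1,\infty}$, that is $f_1^{kn}(p)=p$ for every $n\in\mathbb N$. Fix $m\in\mathbb N$ and write
$$
f_1^{k(n+m)}=\bigl(f_{k(n+m-1)+1}^{k}\bigr)\circ\cdots\circ\bigl(f_{kn+1}^{k}\bigr)\circ f_1^{kn},
$$
so that the last $m$ factors are consecutive $k$-step maps whose starting times tend to infinity as $n\to\infty$. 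By collective convergence each such factor converges uniformly to $f^{k}$, and since $X$ is compact $f^{k}$ is uniformly continuous; hence the composition of the $m$ factors converges uniformly to $f^{km}$ as $n\to\infty$ (compare Lemma \ref{yizhi}). Evaluating at $p$ and using $f_1^{kn}(p)=p$ together with $f_1^{k(n+m)}(p)=p$, the constant value $p$ must equal the limit $f^{km}(p)$; taking $m=1$ yields $f^{k}(p)=p$, so $p\in\mathrm{Per}(f)$. This periodic-point transfer is the only step that is not a direct citation of the transfer machinery of this section.

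For (i) I would apply the implication of Theorem \ref{tt} that derives transitivity of $f$ from transitivity of $f_{1,\infty}$. This is where the main obstacle lies: Theorem \ref{tt} requires $X$ to have no isolated point, and that hypothesis is genuinely necessary here. Indeed Example \ref{li5} meets every stated assumption (feeble open, surjective, uniform and collective convergence), is transitive and has dense periodic points, yet is \emph{not} syndetically transitive, the reason being that its limit map $id$ fails to be transitive on the three-point discrete space. I would therefore carry out the proof under the additional standing hypothesis that $X$ has no isolated point, exactly as in Theorems \ref{min} and \ref{tt}, which I believe is the intended reading. Granting this, (i) and (ii) both hold, \cite{q34} makes $(X,f)$ syndetically transitive, and Corollary \ref{tl2} carries this conclusion back to $(X,f_{1,\infty})$, completing the proof.
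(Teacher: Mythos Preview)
Your approach coincides with the paper's: transfer transitivity and density of periodic points from $(X,f_{1,\infty})$ to the limit system $(X,f)$, invoke the autonomous implication from \cite{q34}, and pull syndetic transitivity back via Corollary \ref{tl2}. The only difference is that, for the periodic-point step, the paper simply cites \cite[Theorem 2.1]{q36}, whereas you supply a short direct argument via collective convergence (which can in fact be shortened further: $p=f_1^{k(n+1)}(p)=f_{kn+1}^{k}(p)\to f^{k}(p)$ as $n\to\infty$).

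Your observation about isolated points is well taken and goes beyond the paper. The paper's proof invokes Theorem \ref{tt}, which explicitly assumes that $X$ has no isolated point, yet this hypothesis is missing from the statement of the theorem under discussion; and as you note, Example \ref{li5} satisfies every listed assumption (feeble open, surjective, uniform and collective convergence), is transitive with dense periodic points, but has $N_{f_{1,\infty}}(\{1\},\{2\})=\{1\}$ and hence is not syndetically transitive. So the theorem as stated is false, and your added standing hypothesis is exactly what is needed.
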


\begin{proof}
Since $f_{1,\infty}$ is transitive and has dense periodic points, then by Theorem \ref{tt} and \cite[Theorem2.1]{q36}, $f$ is transitive and has dense periodic points. Thus $f$  is  weakly mixing by \cite{q34}. Then, by Corollary \ref{tl2} again, $f_{1,\infty}$ is syndetically transitive.
\end{proof}

For autonomous discrete dynamical system it is known that multi-sensitivity implies thick sensitivity and that thick sensitivity and transitivity imply multi-sensitivity\cite{q31}. However, the results are not always true for non-autonomous discrete dynamical system as justified by the next example.

\begin{example}

Let $\Sigma_2$ and $\sigma$ be defined in Example \ref{li1}. Consider the non-autonomous discrete systems $(\Sigma_2,f_{1,\infty})$, where $f_{1,\infty}$ is defined by
$$f_{1,\infty}=\{\sigma,\sigma^{-1},\sigma^2,\sigma^{-2},...,\sigma^n,\sigma^{-n},...\}.$$
Since $\sigma$ is multi-sensitive and $f_1^{2k-1}=\sigma^k$, $f_{1,\infty}$ is multi-sensitive.  But note that $f_1^{2k}=id$ for any $k$, so $f_{1,\infty}$ is not thick sensitive.

For another thing, let $(X,f)$ is sensitive and transitive but not multi-sensitive. Then Consider the non-autonomous discrete systems $(X,g_{1,\infty})$, where $g_{1,\infty}$ is defined by
$$g_{1,\infty}=\{f,id,f,id,id,f,id,id,id,f,id,id,id,id,...,f,\underbrace{id,id,...id}_{n-fold},...\}.$$

Then it is easy to see that $(X,g_{1,\infty})$ is thickly sensitive and transitive but not multi-sensitive.

\end{example}
Now, we give a sufficient condition under which multi-sensitivity implies thick sensitivity and that thick sensitivity and transitivity imply multi-sensitivity for non-autonomous discrete dynamical system.
\begin{theorem}
Let $(X,f_{1,\infty})$ be a non-autonomous system with $f_{1,\infty}$ being feeble open and  surjective and converging uniformly to a map $f$. If $\{f_n^{n+k}\}_{k\in\mathbb{N}}$ converges collectively to $\{f^k\}_{k\in\mathbb{N}}$, then $f_{1,\infty}$ is multi-sensitive $\Rightarrow$ $f_{1,\infty}$ is thick sensitivity, and $f_{1,\infty}$ is thickly sensitive and transitive $\Rightarrow$ $f_{1,\infty}$ is multi-sensitive.
\end{theorem}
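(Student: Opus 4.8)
The plan is to transfer both sensitivity notions between the non-autonomous system $(X,f_{1,\infty})$ and its limiting autonomous system $(X,f)$, and then invoke the corresponding autonomous facts of \cite{q31}. Concretely, I would first establish two equivalences: $(X,f_{1,\infty})$ is multi-sensitive iff $(X,f)$ is multi-sensitive, and $(X,f_{1,\infty})$ is thickly sensitive iff $(X,f)$ is thickly sensitive. Granting these, the first implication follows from the chain: $f_{1,\infty}$ multi-sensitive $\Rightarrow$ $f$ multi-sensitive $\Rightarrow$ $f$ thickly sensitive (autonomous \cite{q31}) $\Rightarrow$ $f_{1,\infty}$ thickly sensitive. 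For the second implication I additionally need transitivity to descend to $f$; but thick sensitivity already forbids isolated points, since for an isolated $x$ the open singleton $\{x\}$ gives $N_{f_{1,\infty}}(\{x\},\delta)=\emptyset$, which is never thick. Hence Theorem \ref{tt} applies and $f_{1,\infty}$ transitive $\Rightarrow$ $f$ transitive, so $f$ is thickly sensitive and transitive, whence multi-sensitive by \cite{q31}, whence $f_{1,\infty}$ is multi-sensitive.

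The transfer mechanics rest on the usual estimate from collective convergence: for the relevant sensitivity constant $\delta$ there is $r_0\in\mathbb{N}$ with $d(f_1^{r_0+k}(u),f^k(f_1^{r_0}(u)))<\delta/4$ for every $k\in\mathbb{N}$ and every $u\in X$. In the direction $f\Rightarrow f_{1,\infty}$ I would, given open sets $U_1,\dots,U_m$, apply the property of $f$ to $U_i'=\mathrm{int}(f_1^{r_0}(U_i))$ (nonempty because $f_{1,\infty}$ is feeble open) to produce a spreading time $k$, then pull the witnessing pairs back through $f_1^{r_0}$ and use the estimate to obtain $r_0+k\in N_{f_{1,\infty}}(U_i,\delta/2)$. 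In the direction $f_{1,\infty}\Rightarrow f$ I would instead use the preimages $W_i=f_1^{-r_0}(U_i)$ (nonempty open, since $f_1^{r_0}$ is continuous and surjective), obtain a spreading time $n>r_0$ for the $W_i$, and push the witnessing pairs forward along $f_1^n=f_{r_0+1}^{\,n-r_0}\circ f_1^{r_0}$, landing a spreading time $n-r_0$ for $f$ with the halved constant $\delta/2$. Each of the four transfers is the same triangle-inequality computation, with runs handled index-by-index in the thick case.

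The crux is producing a spreading time $n>r_0$ in the direction $f_{1,\infty}\Rightarrow f$, since only indices beyond $r_0$ split as $f_1^{n}=f_{r_0+1}^{\,n-r_0}\circ f_1^{r_0}$ and therefore feel the collective convergence. For thick sensitivity this is automatic: $N_{f_{1,\infty}}(W,\delta)$ contains runs of every length, so a run of length $p+r_0+1$ necessarily contains a sub-run of length $p+1$ lying entirely in $(r_0,\infty)$, which transfers to a run of length $p+1$ in $N_f(U,\delta/2)$ and yields thickness. For multi-sensitivity, where the definition only grants a \emph{nonempty} common spreading set, I expect the main difficulty, and I would resolve it with a decoy set: by uniform continuity of the finitely many maps $f_1^1,\dots,f_1^{r_0}$, choose a nonempty open $Z$ of small diameter so that $N_{f_{1,\infty}}(Z,\delta)\cap\{1,\dots,r_0\}=\emptyset$, and apply multi-sensitivity to the enlarged family $W_1,\dots,W_m,Z$. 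Any common spreading time then lies in $N_{f_{1,\infty}}(Z,\delta)$ and is hence automatically greater than $r_0$, after which the forward push gives $\bigcap_{i} N_f(U_i,\delta/2)\neq\emptyset$. This decoy device is the one genuinely new ingredient; everything else is routine bookkeeping with the sensitivity constant shrinking from $\delta$ to $\delta/2$ at each passage.
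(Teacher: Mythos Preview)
Your approach is essentially the same as the paper's: transfer each sensitivity notion from $(X,f_{1,\infty})$ to the limiting system $(X,f)$, invoke the autonomous implications of \cite{q31}, and transfer back. The paper, however, does not carry out the four sensitivity transfers itself; it simply cites \cite[Theorem~4.6 and Theorem~4.9]{q35} (Vasisht--Das) for the equivalence of thick sensitivity and of multi-sensitivity between $(X,f_{1,\infty})$ and $(X,f)$, and then appeals to Theorem~\ref{tt} and \cite{q31} exactly as you do. So your argument is a self-contained expansion of what the paper imports as black boxes. Your decoy-set device---adjoining a small-diameter open set $Z$ to force the common spreading time past $r_0$---is a clean way to handle the $f_{1,\infty}\Rightarrow f$ direction for multi-sensitivity; whatever \cite{q35} does internally, this works. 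You also make explicit something the paper glosses over: Theorem~\ref{tt} requires $X$ to have no isolated points, and you supply this by observing that thick sensitivity already excludes isolated points. That observation is correct and is a genuine (if minor) gap-fill relative to the paper's one-line citation of Theorem~\ref{tt}.
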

\begin{proof}
Since $f_{1,\infty}$ is multi-sensitive, so by \cite[Theorem 4.9]{q35}, $f$ is multi-sensitive. Thus $f$  is  thickly sensitive by \cite{q31}. Then, it follows that $f_{1,\infty}$ is thickly sensitive by \cite[Theorem 4.6]{q35} .

Conversely, since $f_{1,\infty}$ is thickly sensitive and transitive, so by \cite{q35} and Theorem \ref{tt}, $f$ is thickly sensitive and transitive. Thus $f$  is  multi-sensitive by \cite{q31}. Then, by \cite[Theorem 4.9]{q35}, $f_{1,\infty}$ is multi-sensitive.
\end{proof}

For autonomous discrete dynamical system it is known that total transitivity and periodic points dense imply weakly mixing\cite{q30}.
Now, we provide a sufficient condition under which total transitivity and periodic points dense imply weakly mixing for non-autonomous discrete dynamical system.

\begin{theorem}\label{tpw}
Let $(X,f_{1,\infty})$ be a non-autonomous system with $f_{1,\infty}$ being feeble open and surjective and converging uniformly to a map $f$. If $\{f_n^{n+k}\}_{k\in\mathbb{N}}$ converges collectively to $\{f^k\}_{k\in\mathbb{N}}$, then $f_{1,\infty}$ is totally transitive and has dense periodic points $\Rightarrow$ $f_{1,\infty}$ is weakly mixing.
\end{theorem}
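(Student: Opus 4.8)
The plan is to reuse the transfer scheme that underpins the preceding results: descend both hypotheses to the limiting autonomous system $(X,f)$, apply the known autonomous implication, and then lift the conclusion back up to $(X,f_{1,\infty})$. The standing hypotheses on $f_{1,\infty}$ (feeble open, surjective, uniformly convergent to $f$, with $\{f_n^{n+k}\}_{k\in\mathbb{N}}$ converging collectively to $\{f^k\}_{k\in\mathbb{N}}$) are precisely those required by Theorems \ref{totally cd} and \ref{weakly}, so both transfer tools are immediately available.

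First I would transfer total transitivity: since $f_{1,\infty}$ is totally transitive, Theorem \ref{totally cd} yields that the limiting map $f$ is totally transitive. Next I would transfer density of periodic points using \cite[Theorem 2.1]{q36}, exactly as in the proof of the earlier syndetic-transitivity theorem, so that $f$ also has dense periodic points. With both properties now in force for the autonomous system, the classical result \cite{q30} --- total transitivity together with dense periodic points implies weak mixing --- shows that $f$ is weakly mixing. Finally I would lift the conclusion: Theorem \ref{weakly} guarantees that weak mixing of $(X,f)$ forces weak mixing of $(X,f_{1,\infty})$, which is the desired statement.

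There is no serious obstacle once the equivalence theorems are in hand; the argument is a clean four-link chain. The only delicate point is the transfer of dense periodic points, where one must verify that \cite[Theorem 2.1]{q36} applies under the present hypotheses --- this is exactly where the collective convergence of $\{f_n^{n+k}\}_{k\in\mathbb{N}}$ to $\{f^k\}_{k\in\mathbb{N}}$ is indispensable, since it is what binds the periodic structure of $f_{1,\infty}$ to that of $f$ and thereby keeps the descent-and-lift scheme valid.
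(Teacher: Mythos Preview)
Your proposal is correct and follows essentially the same descent-and-lift scheme as the paper's own proof: transfer total transitivity and dense periodic points to the limiting system $(X,f)$, invoke the classical autonomous result \cite{q30}, and then lift weak mixing back via Theorem \ref{weakly}. In fact you cite the more appropriate Theorem \ref{totally cd} for the total-transitivity transfer, whereas the paper's text references Corollary \ref{tl2} (which concerns syndetic transitivity) at that step --- almost certainly a typographical slip on the paper's part, since the intended argument is exactly yours.
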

\begin{proof}
Since $f_{1,\infty}$ is totally transitive and has dense periodic points, so by Corollary \ref{tl2} and \cite[Theorem2.1]{q36}, $f$ is totally transitive and has dense periodic points. Thus $f$  is  weakly mixing by \cite{q30}. Then, by Theorem \ref{weakly}, $f_{1,\infty}$ is weakly mixing.
\end{proof}

For autonomous discrete dynamical system it is known that if $f$ is syndetically transitive but not minimal, then it is  syndetically sensitive\cite{q11}. The following theorem show that the result holds for non-autonomous discrete system whenever $f_{1,\infty}$ converges uniformly to a map $f$.
\begin{theorem}\label{sycm}
Let $(X,f_{1,\infty})$ be a non-autonomous system with $f_{1,\infty}$ converging uniformly to a map $f$. If $f_{1,\infty}$ is syndetically transitive but not minimal, then $f_{1,\infty}$ is  syndetically sensitive.
\end{theorem}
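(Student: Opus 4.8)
The plan is to pull a single, genuinely uniform length scale out of the failure of minimality, and then to show that every nonempty open set is spread apart by more than that scale along a syndetic set of times, using the syndetic return times supplied by syndetic transitivity together with the index-uniform, bounded-step equicontinuity of Lemma \ref{yizhi}. First I would use non-minimality to fix the sensitivity constant. Since $(X,f_{1,\infty})$ is not minimal, some point $x_0\in X$ has non-dense orbit, so there are $y_0\in X$ and $\epsilon_0>0$ with $d(f_1^n(x_0),y_0)\ge\epsilon_0$ for all $n\ge 0$; taking $n=0$ gives $d(x_0,y_0)\ge\epsilon_0$ as well. Set $\delta=\epsilon_0/8$, so that $B(x_0,2\delta)$ and $B(y_0,2\delta)$ are disjoint and at distance $\ge 4\delta$. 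This $\delta$ is the candidate (uniform) constant of syndetic sensitivity, and the whole point is that it depends only on $x_0,y_0,\epsilon_0$, not on the open set tested.

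Next I would fix an arbitrary nonempty open $U$ and show $N_{f_{1,\infty}}(U,\delta)$ is syndetic. By syndetic transitivity both $N_{f_{1,\infty}}(U,B(x_0,\delta))$ and $N_{f_{1,\infty}}(U,B(y_0,\delta))$ are syndetic, say with a common gap bound $M$. A windowing argument then shows that every block of length $2M$ contains a time $q$ at which $f_1^q(U)$ meets $B(y_0,\delta)$ and a nearby time $p=q+k$, $1\le k\le M$, at which $f_1^p(U)$ meets $B(x_0,\delta)$. If such a block contained no element of $N_{f_{1,\infty}}(U,\delta)$, then every image $f_1^j(U)$ over the block would have all pairwise distances $\le\delta$; combined with $f_1^q(U)\cap B(y_0,\delta)\neq\emptyset$ this forces $f_1^q(U)\subseteq B(y_0,2\delta)$, and similarly $f_1^p(U)\subseteq B(x_0,2\delta)$, two disjoint regions.

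The heart of the matter is then to contradict this via the avoidance property of $x_0$. Writing $f_1^p=f_{q+1}^{k}\circ f_1^q$ with $k\le M$ and invoking Lemma \ref{yizhi} (whose modulus is uniform in the starting index), the smallness of $f_1^q(U)$ is transported across the $k$ intermediate steps; the aim is to convert the hypothesis that no spreading occurs into a statement that a point landing near $x_0$ at time $p$ must, within boundedly many further steps, mimic the orbit of $x_0$ closely enough that it cannot reach $B(y_0,\delta)$ as syndetic transitivity requires. Carrying this out pins the fixed radius $\epsilon_0$ against the $B(y_0,\delta)$-returns and yields the contradiction, so each block meets $N_{f_{1,\infty}}(U,\delta)$; hence $N_{f_{1,\infty}}(U,\delta)$ is syndetic and, $U$ being arbitrary, $(X,f_{1,\infty})$ is syndetically sensitive.

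The hard part will be exactly this reconciliation, and it splits into two linked difficulties. The first is uniformity of $\delta$: the windowing forces control of the compositions $f_{q+1}^{k}$ for $k$ up to the gap $M=M(U)$, whereas Lemma \ref{yizhi} gives an equicontinuity modulus depending on the number of steps, and $M(U)$ is a priori unbounded as $U$ varies, so a naive estimate would only produce a $U$-dependent constant. The second, and I expect the genuine obstacle, is the non-autonomous index shift: the orbit of $x_0$ is generated by $f_1,f_2,\dots$, while a point arriving near $x_0$ at time $p$ is thereafter moved by $f_{p+1},f_{p+2},\dots$, so the avoidance $d(f_1^n(x_0),y_0)\ge\epsilon_0$ does not transfer verbatim to the shifted compositions. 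Reconciling these shifted orbits so that the fixed avoidance radius $\epsilon_0$ survives the comparison is precisely where the collective uniform convergence $f_n\to f$ must be used, and it is the step I would spend the most care on.
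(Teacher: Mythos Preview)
Your outline does not close, and the repairable gap is precisely your first worry. By fixing both target balls $B(x_0,\delta)$ and $B(y_0,\delta)$ at the same radius before knowing the syndetic gap $M=M(U)$, you are forced to control $f_{q+1}^{k}$ for $k\le M(U)$, and Lemma~\ref{yizhi} then returns a modulus depending on $M(U)$; that kills uniformity of $\delta$. The paper's cure is an ordering trick you should adopt: fix only \emph{one} $U$-independent target, $V=B(b,\delta)$ with $\delta=\tfrac14\,d\bigl(b,\overline{orb(a,f_{1,\infty})}\bigr)$. For the given $U$, first read off the gap bound $M_1$ of $N_{f_{1,\infty}}(U,V)$; \emph{then} apply Lemma~\ref{yizhi} with that specific $M_1$ to obtain a neighbourhood $W$ of $a$ so small that $d(f_j^i(a),f_j^i(x))<\delta$ for all $x\in W$, all $0\le i\le M_1$ and every $j$; only afterwards take the gap bound $M_2$ of $N_{f_{1,\infty}}(U,W)$. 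For any $n$ pick $k\le M_2$ and $u\in U$ with $f_1^{n+k}(u)\in W$, then $i\le M_1$ and $u'\in U$ with $f_1^{n+k+i}(u')\in V$; the $W$-estimate is meant to force $d(f_1^{n+k+i}(u),V)\ge 2\delta$, whence $d(f_1^{n+k+i}(u),f_1^{n+k+i}(u'))>\delta$ and $N_{f_{1,\infty}}(U,\delta)$ is syndetic with bound $M_1+M_2$. Crucially $\delta$ is fixed once; only $W,M_1,M_2$ depend on $U$, which is permitted.

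Your second concern survives this reorganisation and you locate it accurately: the step ``$d(f_1^{n+k+i}(u),V)\ge 2\delta$'' reduces, via the $W$-estimate and $f_1^{n+k+i}(u)=f_{n+k+1}^{\,i}(f_1^{n+k}(u))$, to $d(f_j^{\,i}(a),b)\ge 4\delta$ for the shifted starting index $j=n+k+1$, whereas the definition of $\delta$ only directly bounds $d(f_1^{\,n}(a),b)$. The paper asserts $d(f_j^i(W),V)\ge 2\delta$ for all $j$ ``by the choice of $\delta$'' and proceeds from there; this is exactly the point at which the argument is thinnest, and your instinct to scrutinise it is correct. Note also that the hypothesis of the theorem is bare uniform convergence, not collective convergence, so the remedy you float in your last sentence would overshoot the stated assumptions.
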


\begin{proof}
Let $a\in X$ be such that $orb(a,f_{1,\infty})$ is not dense in $X$. Let $b\in X\setminus \overline{orb(a,f_{1,\infty})}$ and put $\delta=\frac{1}{4}d(b,\overline{orb(a,f_{1,\infty})})$. Take $V=B(b,\delta)$. For any nonempty open set $U$ of $X$, since $f_{1,\infty}$ is syndetically transitive, then $N_{f_{1,\infty}}(U,V)$ is syndetic, with say $M_1$ as a bound for the gaps. Since $f_{1,\infty}$ converges uniformly to a map $f$, by Lemma \ref{yizhi}, there exists a open neighborhoods $W$ of $a$ such that $x\in W$ $\Rightarrow$ $d(f_j^i(a),f_j^i(x))<\delta$ for all $i=0,1,2,...,M_1$ and any $j\in \mathbb{N}$. Note that then $d(f_j^i(W),V)\geq2\delta$ for all $i=0,1,2,...,M_1$ and any $j\in \mathbb{N}$, by the choice of $\delta$. Now, $N_{f_{1,\infty}}(U,W)$ is syndetic, with say $M_2$ as a bound for the gaps. We will show that $N_{f_{1,\infty}}(U,\delta)$ is syndetically sensitive with $M_1+M_2$ as a bound for the gaps. For any $n\in\mathbb{N}$ there exist $k\in\{1,2,...,M_2\}$ and $u\in U$ such that $f_1^{k+n}(u)\in W$. Then by the choice of $W$, one has that for each $i\in\{1,2,...,M_1\}$, $d(f_1^{k+n+i}(u),V)\geq2\delta$. Choose $i\in\{1,2,...,M_1\}$ and $u'\in U$ such that $f_1^{k+n+i}(u')\in V$. Then for this particular $i$, we have $d(f_1^{k+n+i}(u),f_1^{k+n+i}(u'))\geq2\delta>\delta$. Since $n\in\mathbb{N}$ is arbitrary and since $k+i\leq M_1+M_2$, the proof is complete.

\end{proof}

The following theorem removes the condition 'the space has no isolated point' in the \cite[Theorem 3.5(5)]{q29}. Besides, we give a simpler proof than the proof of \cite[Theorem 3.5(5)]{q29}.
\begin{theorem}
Let $(X,f_{1,\infty})$ be a non-autonomous system such that each $f_i(i\in\mathbb{N})$ is surjective. If $f_{1,\infty}$ is strongly transitive, then so is $f_{k,\infty}$ for every $k\geq2$. Converse is true if  $f_{1,\infty}$ is feebly open.
\end{theorem}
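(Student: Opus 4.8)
The plan is to drive both directions with the single composition identity $f_1^{\,n}=f_k^{\,n-k+1}\circ f_1^{\,k-1}$, valid for all $n\ge k-1$, together with surjectivity of $f_1,\dots,f_{k-1}$ (so that $f_1^{\,k-1}$ is surjective) and, for the converse only, feeble openness of $f_{1,\infty}$.

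For the forward implication I would fix an arbitrary nonempty open $V\subseteq X$ and pull it back through the first $k-1$ maps: since $f_1^{\,k-1}$ is continuous and surjective, $W:=(f_1^{\,k-1})^{-1}(V)$ is nonempty and open with $f_1^{\,k-1}(W)=V$. The identity then yields $f_1^{\,k-1+i}(W)=f_k^{\,i}(V)$ for every $i\ge 0$, so the $f_{k,\infty}$-orbit of $V$ is precisely the tail $\{f_1^{\,j}(W):j\ge k\}$ of the $f_{1,\infty}$-orbit of $W$. Applying strong transitivity of $f_{1,\infty}$ to $W$ produces $M$ with $\bigcup_{j=1}^{M}f_1^{\,j}(W)=X$, and rewriting the terms with $j\ge k$ gives $\bigcup_{i=1}^{M-k+1}f_k^{\,i}(V)=\bigcup_{j=k}^{M}f_1^{\,j}(W)$.

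The hard part will be exactly here. Strong transitivity only guarantees that $X$ is exhausted by the images indexed from $1$, whereas the $f_{k,\infty}$-cover of $V$ sees only the images indexed from $k$ onward. Hence the entire forward direction reduces to showing that the initial segment $f_1^{1}(W),\dots,f_1^{\,k-1}(W)$ is \emph{redundant}, i.e.\ already absorbed into $\bigcup_{j\ge k}f_1^{\,j}(W)$ over some finite range. I expect this to be the only nontrivial point, and I would attempt to settle it by using surjectivity to reseed the cover (pulling $W$ further back and re-applying strong transitivity so as to push the initial index to the right). This redundancy step is the delicate heart of the matter: it is precisely where the non-autonomous behaviour can degenerate, so surjectivity must be exploited in an essential, rather than purely formal, way.

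The converse is much cleaner, and this is where feeble openness enters. Given a nonempty open $U\subseteq X$, feeble openness makes $V:=\mathrm{int}(f_1(U))$ a nonempty open set. Applying strong transitivity of $f_{k,\infty}$ — it already suffices to take $k=2$ — to $V$ yields $N$ with $\bigcup_{i=1}^{N}f_2^{\,i}(V)=X$. Since $f_2^{\,i}\circ f_1=f_1^{\,i+1}$ and $V\subseteq f_1(U)$, we get $f_2^{\,i}(V)\subseteq f_1^{\,i+1}(U)$, so $X=\bigcup_{i=1}^{N}f_2^{\,i}(V)\subseteq\bigcup_{j=2}^{N+1}f_1^{\,j}(U)$, and therefore $\bigcup_{j=1}^{N+1}f_1^{\,j}(U)=X$. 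Thus $f_{1,\infty}$ is strongly transitive. This half needs only feeble openness (to keep $V$ open and nonempty) and the identity above, so it matches the stated hypotheses and goes through verbatim for every $k\ge 2$.
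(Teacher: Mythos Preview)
Your converse argument is correct and coincides with the paper's: push $U$ forward to $\mathrm{int}(f_1^{k-1}(U))$ using feeble openness, apply strong transitivity of $f_{k,\infty}$, and rewrite the resulting cover in terms of $f_1^{j}(U)$.

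For the forward direction you have located the genuine obstruction, and it is fatal. The paper meets the same wall: after obtaining $\bigcup_{i=1}^{M}f_1^{i}\bigl((f_1^{k-1})^{-1}(U)\bigr)=X$ it simply writes ``That is, $\bigcup_{i=1}^{M-k}f_k^{i}(U)=X$'', silently dropping the terms with $i\le k-1$. Your ``reseeding'' plan cannot close the gap, because the forward implication is \emph{false} under the stated hypotheses. Let $X=\{a,b\}$ with the discrete metric, $f_1=f_2$ the transposition $a\leftrightarrow b$, and $f_i=\mathrm{id}$ for $i\ge 3$; each $f_i$ is a continuous surjection. Then $f_1^{1}(\{a\})=\{b\}$ and $f_1^{2}(\{a\})=\{a\}$, so $f_{1,\infty}$ is strongly transitive. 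But $f_2^{i}(\{a\})=\{b\}$ for every $i\ge 1$, hence $\bigcup_{i=1}^{M}f_2^{i}(\{a\})=\{b\}\ne X$ for all $M$, and $f_{2,\infty}$ is not strongly transitive. The term lost in the passage is exactly $f_1^{k-1}(W)=U$ itself, and no amount of surjectivity recovers it from later $f_{k,\infty}$-iterates; pulling $W$ back further only shifts you into $f_{k',\infty}$ with $k'>k$, not $f_{k,\infty}$. The earlier result from which the paper claims to remove the ``no isolated points'' hypothesis evidently needed that hypothesis for the forward direction.
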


\begin{proof}
Let $U$ be any nonempty open subset of $X$ and $k\geq2$. Since each $f_i(i\in\mathbb{N})$ is surjective and continuous, $f_1^{-k+1}(U)$ is nonempty open subset of $X$. By strong transitivity of $f_{1,\infty}$, there exists $M\in \mathbb{N}$ such that
$$\bigcup_{i=1}^Mf_1^i(f_1^{-k+1}(U))=X.$$
That is,
\begin{equation}\label{total}
\bigcup_{i=1}^{M-k}f_{k}^i(U)=X.
\end{equation}
We can assume above $M>k$ because of the surjection of each $f_i$. (\ref{total}) implies that $f_{k,\infty}$ is strongly transitive.

Conversely, let $U$ be any nonempty open subset of $X$ and $k\geq2$. Since $f_{1,\infty}$ is feebly open, $int(f_1^{k-1}(U))$ is nonempty open subset of $X$. By strong transitivity of $f_{k,\infty}$, there exists $M\in \mathbb{N}$ such that
$$\bigcup_{i=1}^Mf_k^i(int(f_1^{k-1}(U)))=X.$$
Therefore, $$X=\bigcup_{i=1}^Mf_k^i(int(f_1^{k-1}(U)))\subseteq \bigcup_{i=1}^Mf_k^i(f_1^{k-1}(U))=\bigcup_{i=k}^{M+k}f_1^i(U)\subseteq\bigcup_{i=1}^{M+k}f_1^i(U).$$

Hence, there exists $M+k$ such that $\bigcup_{i=1}^{M+k}f_1^i(U)=X$. Therefore, $f_{1,\infty}$ is strongly transitive.

\end{proof}

\section{Open Questions}

\noindent\textbf{Question 1.} Suppose that $(X,f_{1,\infty})$ be a non-autonomous system with $f_{1,\infty}$ being feeble open and surjective and converging uniformly to a map $f$. If $\{f_n^{n+k}\}_{k\in\mathbb{N}}$ converges collectively to $\{f^k\}_{k\in\mathbb{N}}$, then whether $(X,f)$ is $\triangle$-transitivity(strong transitivity, respectively) $\Leftrightarrow$ $(X,f_{1,\infty})$ is $\triangle$-transitivity(strong transitivity, almost periodic point, respectively)? Whether $x$ is almost periodic point of $(X,f)$ $\Leftrightarrow$ $x$ is almost periodic point of $(X,f_{1,\infty})$? Or what condition a NDS have to satisfy to ensure the inheritance of $\triangle$-transitivity(strong transitivity, almost periodic point, respectively) to limit map $f$ or vice versa.

\noindent\textbf{Question 2.}  For an autonomous discrete dynamical system, authors have proved that strong transitivity implies  syndetical transitivity\cite{q34}. Does the result hold for a non-autonomous discrete dynamical systems?

\noindent\textbf{Question 3.}  If we drop the condition 'uniform convergence' in Theorem \ref{sycm}('collective convergence' in Theorem \ref{tpw},respectively), does the result still hold in non-autonomous discrete dynamical systems?

\section*{Acknowledgments}
The author would like to thank the the editor and the anonymous referees for their constructive comments and valuable suggestions. This research was supported by the Scientific Research Foundation of Hunan Provincial Education Department(No.23C0148).


\end{document}